\setlist[itemize]{leftmargin=2em}
\setlist[enumerate]{leftmargin=2em}
\definecolor{darkblue}{rgb}{0.0,0,0.7} 
\definecolor{darkred}{rgb}{0.7,0,0} 
\definecolor{darkgreen}{rgb}{0, .6, 0} 
\newcommand{\defncolor}{\color{darkred}}
\newcommand{\defn}[1]{{\defncolor\emph{#1}}} 
\newtheorem{theorem}{Theorem}[section]
\newtheorem{prop}[theorem]{Proposition}
\newtheorem{cor}[theorem]{Corollary}
\newtheorem{lemma}[theorem]{Lemma}
\newtheorem{problem}[theorem]{Problem}
\theoremstyle{definition}
\newtheorem{definition}[theorem]{Definition}
\newtheorem{example}[theorem]{Example}
\newtheorem{remark}[theorem]{Remark}
\numberwithin{equation}{section}
\newcommand{\idiot}[1]{\vspace{5 mm}\par \noindent
\marginpar{\textsc{Note}}
\framebox{\begin{minipage}[c]{0.95 \textwidth}
#1 \end{minipage}}\vspace{5 mm}\par}
\renewcommand{\idiot}[1]{}
\def\QQ{{\mathbb Q}}
\def\NN{{\mathbb N}}
\def\CC{{\mathbb C}}
\def\KK{{\mathbb K}}
\def\unprotectedboldentry#1{\textcolor{Red}{\large{#1}}}
\def\boldentry{\protect\unprotectedboldentry}
\newcommand{\tikztableauinternal}[1]{
    \def\newtableau{#1}
    \coordinate (x) at (-0.5,0.5);
    \coordinate (y) at (-0.5,0.5);
    \foreach \row in \newtableau {
        \foreach \entry in \row {
            \ifthenelse{\equal{\entry}{X} \OR \equal{\entry}{None}}
               {
                \node (y) at ($(y) + (1,0)$) {};
                \fill[color=gray!30] ($(y)-(0.5,0.5)$) rectangle +(1,1);
                \draw[color=gray, dotted] ($(y)-(0.5,0.5)$) rectangle +(1,1);
               }
               {
                \ifthenelse{\equal{\entry}{\boldentry X}}
                   {
                    \node (y) at ($(y) + (1,0)$) {};
                    \fill[color=gray] ($(y)-(0.5,0.5)$) rectangle +(1,1);
                    \draw ($(y)-(0.5,0.5)$) rectangle +(1,1);
                   }
                   {
                    \node (y) at ($(y) + (1,0)$) {\entry};
                    \draw ($(y)-(0.5,0.5)$) rectangle +(1,1);
                   }
               }
            }
        \coordinate (x) at ($(x)-(0,1)$);
        \coordinate (y) at (x);
        }
}
\newcommand{\tikztableau}[2][scale=0.6,every node/.style={font=\small}]{%
    \begin{tikzpicture}[#1]%
        \tikztableauinternal{#2}%
    \end{tikzpicture}%
}
\newcommand{\tikztableauscriptsize}[1]{\tikztableau[scale=0.30,every node/.style={font=\scriptsize}]{#1}}
\newdimen\squaresize \squaresize=10pt
\newdimen\thickness \thickness=0.4pt
\def\square#1{\hbox{\vrule width \thickness
     \vbox to \squaresize{\hrule height \thickness\vss
        \hbox to \squaresize{\hss#1\hss}
     \vss\hrule height\thickness}
\unskip\vrule width \thickness}
\kern-\thickness}
\def\vsquare#1{\vbox{\square{$#1$}}\kern-\thickness}
\def\young#1{
\vbox{\smallskip\offinterlineskip
\halign{&\vsquare{##}\cr #1}}}
\def\thisbox#1{\kern-.09ex\fbox{#1}}
\def\downbox#1{\lower1.200em\hbox{#1}}
\newdimen\Squaresize \Squaresize=20pt
\newdimen\Thickness \Thickness=0.4pt
\def\Square#1{\hbox{\vrule width \Thickness
     \vbox to \Squaresize{\hrule height \Thickness\vss
        \hbox to \Squaresize{\hss#1\hss}
     \vss\hrule height\Thickness}
\unskip\vrule width \Thickness}
\kern-\Thickness}
\def\Vsquare#1{\vbox{\Square{$#1$}}\kern-\Thickness}
\title[Plethysm coefficients]{The mystery of plethysm coefficients}
\author[Colmenarejo]{Laura Colmenarejo}
\address[L. Colmenarejo]{Department of Mathematics, North Carolina State University, 2311 Stinson Drive, Raleigh, NC 27607, U.S.A}
\email{laura.colmenarejo.hernando@gmail.com}
\urladdr{https://sites.google.com/view/l-colmenarejo/home}
\author[Orellana]{Rosa Orellana}
\address[R. Orellana]{Mathematics Department, Dartmouth College, 6188 Kemeny Hall,
Hanover, NH 03755, U.S.A.}
\email{Rosa.C.Orellana@dartmouth.edu}
\urladdr{https://math.dartmouth.edu/~orellana/}
\author[Saliola]{Franco Saliola}
\address[F. Saliola]{D\'epartement de math\'ematiques,
Universit\'e du Qu\'ebec \`a Montr\'eal, Canada}
\email{saliola.franco@uqam.ca}
\urladdr{http://lacim.uqam.ca/~saliola/}
\author[Schilling]{Anne Schilling}
\address[A. Schilling]{Department of Mathematics, University of California, One Shields
Avenue, Davis, CA 95616-8633, U.S.A.}
\email{anne@math.ucdavis.edu}
\urladdr{http://www.math.ucdavis.edu/\~{}anne}
\author[Zabrocki]{Mike Zabrocki}
\address[M. Zabrocki]{Department of Mathematics and Statistics,  York University, 4700 Keele Street, Toronto, 
Ontario M3J 1P3, Canada}
\email{zabrocki@mathstat.yorku.ca}
\urladdr{http://garsia.math.yorku.ca/~zabrocki/}
\begin{document}

\begin{abstract}
Composing two representations of the general linear groups gives rise to Littlewood's
(outer) plethysm. On the level of characters, this poses the question of finding the Schur
expansion of the plethysm of two Schur functions. A combinatorial interpretation for the Schur 
expansion coefficients of the plethysm of two Schur functions is, in general, still
an open problem. We identify a proof technique of combinatorial representation
theory, which we call the ``$s$-perp trick'', and point out several examples in the
literature where this idea is used. We use the $s$-perp trick to give algorithms for computing 
monomial and Schur expansions of symmetric functions. In several special cases,
these algorithms are more efficient than those currently implemented in {\sc
SageMath}.
\end{abstract}

\maketitle

\section{Introduction}
\label{Introduction}

The isomorphism classes of complex irreducible polynomial representations of $GL_n := GL_n(\CC)$ are
indexed by integer partitions $\lambda$ with at most $n$ parts. We denote such a representation by $\rho^\lambda$.
Its character is identified with the \defn{Schur polynomial} (see \cite{ec2})
\begin{equation*}
        s_\lambda(x_1,\ldots,x_n) = \sum_{T \in \mathsf{SSYT(\lambda)}} x^{\mathrm{weight}(T)} ,
\end{equation*}
where $\mathsf{SSYT}(\lambda)$ is the set of all semistandard Young tableaux of shape $\lambda$ over the
alphabet $\{1,2,\ldots,n\}$ and $\mathrm{weight}(T)$ is an $n$-dimensional vector, where the $i$-th entry contains the number of 
occurrences of the letter $i$ in $T$, and where $x^\alpha = x_1^{\alpha_1} x_2^{\alpha_2} \cdots x_n^{\alpha_n}$ for any $n$-dimensional vector $\alpha$.
The composition of two such representations, say $\rho^\lambda : GL_n \to GL_m$
and $\rho^\mu : GL_m \to GL_\ell$, is also a polynomial representation of $GL_n$,
and its character is denoted by $s_\lambda[ s_\mu]$.
This operation can be viewed as an operation on symmetric polynomials, which was named \defn{(outer) plethysm} by
Littlewood~\cite{Littlewood.1944}.

The main objective of this paper is to discuss the following open problem.
\begin{problem}
\label{problem.plethysm}
Since $s_\lambda[s_\mu]$ is the character of a $GL_n$-representation, it is an $\NN$-linear combination of Schur
polynomials. Find a combinatorial interpretation of the coefficients $a_{\lambda,\mu}^\nu \in \NN$ in the expansion
\begin{equation*}
        s_\lambda[s_\mu] = \sum_\nu a_{\lambda, \mu}^\nu s_\nu.
\end{equation*}
\end{problem}
In the last century, the problem of understanding the coefficients $a_{\lambda,\mu}^\nu$ has stood as a measure
of progress in the field of $GL_n$-representation theory (see for instance Problem 9 of \cite{Stan}).
Here we identify the \defn{$s$-perp trick} as a possible way to approach this problem
by proving known cases in a simple way and finding some new combinatorial descriptions
of the $a_{\lambda,\mu}^\nu$. In addition,
we demonstrate that the $s$-perp trick gives an efficient way to compute plethysm coefficients.

The simplest form of the problem occurs when the partitions $\lambda$ and $\mu$
are both of row shape, however even this case is notoriously difficult and
explicit formulae are known only in very special circumstances. The following
table presents a (non-exhaustive) list of some of the known results in this
direction.
\begin{center}
    \begin{tabular}{ll}
        \toprule
        Formulae for plethysms of the form $s_m[s_n]$                   & References                              \\ \midrule
        $s_2[s_n]$  in terms of Schur functions                         & \cite{Littlewood1940}                   \\
        $s_3[s_n]$  in terms of Schur functions                         & \cite{Thrall1942}                       \\
        $s_4[s_n]$  in terms of Schur functions                         & \cite{Foulkes1954, Howe1987}            \\
        $s_2[s_\lambda]$ in terms of Schur functions                    & \cite{CarreLeclerc1995, vanLeeuwen2000} \\
        $s_\lambda[s_\mu]$  in terms of fundamental quasisymmetric functions    & \cite{LoehrWarrington2012}              \\
        $s_2[s_b[s_a]]$ and $s_c[s_2[s_a]]$ in terms of Schur functions & \cite{GutierrezRosas2022}               \\
        \bottomrule
    \end{tabular}
\end{center}
Since general formulae for the coefficients $a_{\lambda, \mu}^{\nu}$ have been
elusive, various methods for computing plethysm have been developed
\cite{CarbonaraRemmelYang1995, CariniRemmel1998, ChenGarsiaRemmel1984,
Colmenarejo2017, Duncan1952, Duncan1954, KahleMichalek2016, LangleyRemmel2004,
      RemmelShimozono1998, Todd1949, Wildon2018, Yang1998}
as well as representation-theoretic approaches
\cite{BowmanPaget, deBoechPagetWildon2021, Howe1987, UBP2021,
    PagetWildon2021, ScharfThibon1994}.

The paper is organized as follows.
In Section~\ref{section.notation}, we set up notation in the framework of symmetric functions and recall the definition of plethysm.
In Section~\ref{section.s perp}, we describe the $s$-perp trick to prove symmetric function
identities and identify places in the literature where this trick has been used.
In Section \ref{section.schur}, we state an algorithm for computing the
Schur expansion for a symmetric function $f$ given the Schur
expansions of $s_r^\perp f$. In Section~\ref{section.plethysm}, we
apply this algorithm to plethysm expressions to show
how it is used to speed up calculations of this type and to prove/derive new combinatorial
formulas for the coefficients $a_{\lambda, \mu}^{\nu}$ in Problem~\ref{problem.plethysm}.

\subsection*{Acknowledgments}
The authors would like to thank AIM for the opportunity to collaborate through their SQuaRE program.

The first author was partially supported by FMQ333 and PID2020-117843GB-I00. 
The second author was partially supported by NSF grant DMS-2153998 and the fourth author 
was partially supported by NSF grants DMS--1760329 and DMS--2053350.  
The third and fifth authors were supported by NSERC Discovery Grants.

\section{Symmetric functions and plethysm}
\label{section.notation}

We begin by reviewing some notation in the framework of the ring of symmetric functions. We refer
 the reader to references like  \cite{Macdonald, Sagan, ec2} for more details. 

A \defn{partition} of a positive integer $n$ is a sequence of
positive integers $\lambda = (\lambda_1, \lambda_2, \ldots, \lambda_r)$ with $\lambda_1 \geqslant \lambda_2 \geqslant \cdots
\geqslant \lambda_r > 0$ whose sum $|\lambda| := \lambda_1 + \lambda_2 + \cdots + \lambda_{r}$
is $n$. We use the notation $\lambda \vdash n$ to indicate that $\lambda$ is a partition of $n\in \mathbb{N}$.  The \defn{length} of $\lambda$ 
is denoted $\ell(\lambda):=r$. We assume that the empty partition, $\lambda=()$, is the only partition of $n=0$ and its length is $0$. Also, we 
use the notation $\overline{\lambda}=(\lambda_2,\lambda_3,\dots,\lambda_{\ell(\lambda)})$ and $\lambda'$ to denote conjugate partition.

Let $\KK$ be a ring containing $\QQ$ as a subfield, such as $\CC$ or $\QQ(q,t)$.
The ring of symmetric functions is defined as
\begin{equation*}
	\Lambda := \KK[p_1, p_2, p_3, \ldots ],
\end{equation*}
where the generators $p_r$ are known as the \defn{power sums} and
the degree of $p_r$ is equal to $r$.  The subspace of symmetric functions
of degree $n$ is denoted $\Lambda_{=n}$ and is spanned by the products $p_\lambda :=
p_{\lambda_1} p_{\lambda_2} \cdots p_{\lambda_{\ell(\lambda)}}$,
where $\lambda \vdash n$. 

There are five distinguished bases for $\Lambda_{=n}$ that we will refer to in this
exposition: the \defn{power sum} $\{ p_\lambda \}_{\lambda \vdash n}$,
 \defn{complete} (or homogeneous) $\{ h_\lambda \}_{\lambda \vdash n}$,
 \defn{elementary} $\{ e_\lambda \}_{\lambda \vdash n}$,
 \defn{monomial} $\{ m_\lambda \}_{\lambda \vdash n}$ and
 \defn{Schur} $\{ s_\lambda \}_{\lambda \vdash n}$ bases. (See  \cite{Macdonald, Sagan, ec2} for the definition of these bases and the basic relations between them.) 
 
The ring of symmetric functions is endowed with a \emph{standard involution} defined by setting 
$\omega(h_\lambda) = e_\lambda$, $\omega(p_\lambda) = (-1)^{|\lambda|+\ell(\lambda)} p_\lambda$,
or $\omega(s_\lambda) = s_{\lambda'}$. It also has a scalar product, 
known as the \defn{Hall inner product}, which is defined by 
\begin{equation*}
	\left< s_\lambda, s_\mu \right> = \left< h_\lambda, m_\mu \right>
	=\begin{cases}
	1&\hbox{ if }\lambda=\mu,\\
	0&\hbox{ otherwise}.
	\end{cases}
\end{equation*}
If multiplication by a symmetric function $f$ is thought of as a linear operator,
then the linear operator which is adjoint to multiplication by $f$ with respect to this scalar
product is denote $f^\perp$. It satisfies
$\left< f^\perp(g), h \right> = \left< g, f h \right>$ for all symmetric functions
$f,g,h \in \Lambda$ and can be calculated via the formula 
\begin{equation}
\label{equation.perp}
	f^\perp(g) = \sum_{\mu} \left< g, f s_\mu \right> s_\mu.
\end{equation}
Then for $f\in \Lambda$, we can view $f^\perp$ as a (bi-)linear operator since
$(f + g)^\perp = f^\perp + g^\perp$ and $f^\perp( g + h) = f^\perp(g) + f^\perp(h)$.
For the Schur basis, $s_\lambda^\perp(s_\mu) = \sum_\nu c^\mu_{\lambda\nu} s_\nu$, where
$c^\mu_{\lambda\nu}$ is the Littlewood--Richardson coefficient.

As one of the applications of the $s$-perp trick we will need the
notion of composition of symmetric functions, or \defn{plethysm}, and
extend its use to \defn{plethystic notation} on symmetric functions.
In \cite{Macdonald}, the operation is denoted $f \circ g$ but it is convenient to
express composition using square brackets.
If $g \in \Lambda$ with $g = \sum_\lambda c_\lambda p_\lambda$ for some coefficients
$c_\lambda \in \QQ$,\footnote{The condition that the coefficients $c_\lambda$ are in $\QQ$ is important because using plethystic notation,
if the base ring contains variables (for example, if $\KK=\QQ(q,t)$), then the variables must be treated differently.} then
$$p_r[g] = \sum_\lambda c_\lambda p_{r \lambda_1} p_{r \lambda_2}
\cdots p_{r \lambda_{\ell(\lambda)}}$$
for all positive integers $r$,
and $p_\mu[g] = p_{\mu_1}[g] p_{\mu_2}[g] \cdots p_{\mu_{\ell(\mu)}}[g]$
for all partitions $\mu$.
The plethysm of $f$ and $g$, for $f \in \Lambda$ with $f = \sum_\mu c'_\mu p_{\mu}$ with $c'_\mu \in \KK$,
is defined as
$$f[g] = \sum_\mu c'_\mu p_{\mu}[g].$$

Next, \defn{plethystic notation} extends the operation of plethysm to expressions containing variables
from the base ring $\KK$. For $E:=E(x_1, x_2, x_3, \ldots) \in \KK$, we have
\[
	p_r[E(x_1, x_2, x_3, \ldots)] = E(x_1^r, x_2^r, x_3^r, \ldots)
\]
and for $f \in \Lambda$, where $f = \sum_\lambda c_\lambda p_\lambda$ (with $c_\lambda \in \KK$),
\[
	f[E] = \sum_\lambda c_\lambda p_{\lambda_1}[E] p_{\lambda_2}[E] \cdots p_{\lambda_{\ell(\lambda)}}[E]~.
\]

We will use the symbol $X$ to stand for an arbitrary
alphabet of variables $X := x_1 + x_2 + x_3 + \cdots$, but as that expression is sufficiently general
it may be replaced with any element of $\Lambda$ and the identity will hold true. In particular,
$f[X]$ is an expression equivalent to $f$ since if $X=p_1$, then by definition, $f[p_1] = f$.

For our purposes, we require the following identities which can be derived from this
definition (see \cite{Macdonald}):
\begin{align}
	f[X+t] &= \sum_{r \geqslant 0} (s_r^\perp f)[X] t^r, &
	f[X-t] &= \sum_{r \geqslant 0} (s_{1^r}^\perp f)[X] (-t)^r,\label{equation.Xpmt}\\
	f[-X] &= (-1)^{\mathsf{degree}(f)} (\omega f)[X],&
	f[t X] &= t^{\mathsf{degree}(f)} f[X],\label{equation.omegatfactor}
\end{align}
where $f \in \Lambda$ is of homogeneous
degree equal to $\mathsf{degree}(f)$ and $t$ is a variable in the base ring $\KK$.
Then for expressions $A_1, A_2, \ldots, A_k \in \Lambda$,\begin{equation}\label{equation.LR}
	f[A_1 + A_2 + \cdots + A_k] = \sum_{\nu^{(*)}}
	s_{\nu^{(1)}}[A_1] s_{\nu^{(2)}}[A_2] \cdots s_{\nu^{(k)}}[A_k]
	\left< f, s_{\nu^{(1)}} s_{\nu^{(2)}} \cdots s_{\nu^{(k)}} \right>,
\end{equation}
where the sum is over all sequences of partitions $\nu^{(*)} = (\nu^{(1)}, \nu^{(2)}, \ldots, \nu^{(k)})$
with $\sum_{i=1}^k |\nu^{(i)}| = \mathsf{degree}(f)$.

Given two symmetric functions $f,g \in \Lambda$ with known monomial expansion $f = \sum_{i\geqslant 1} x^{a^i}$,
where $a^1, a^2, \ldots$ are vectors,
the plethysm is also given by
\[
	g[f] = g(x^{a^1}, x^{a^2},\ldots).
\]
\begin{example}
Since $s_1 = x_1 + x_2 + \cdots$, it hence immediately follows that 
\[
	g[s_1] = g(x_1,x_2,\ldots)=g
\]
and since $p_n = x_1^n + x_2^n + \cdots$, it follows that if $f = \sum_{i\geqslant 1} x^{a^i}$, then
\[
	f[p_n]= f(x_1^n,x_2^n,\ldots) = \sum_{i\geqslant 1} x^{a^in} = p_n[f].
\]
\end{example}

\begin{example}
\def\TableauOfTableaux#1#2#3#4{%
\begin{tikzpicture}[every node/.style={inner sep=2pt, outer sep=0pt, font=\scriptsize}]
    \node [draw] (row0col0) at (0, 0)
        {\begin{tikzpicture}[every node/.style={draw, outer sep=0pt, font=\tiny}]
                \node [] (A) at (0, 0) {$#1$};
                \node [right=0pt of A] {$#2$};
         \end{tikzpicture}};
    \node [right=0pt of row0col0, draw] (row0col1)
        {\begin{tikzpicture}[every node/.style={draw, outer sep=0pt, font=\tiny}]
                \node [] (A) at (0, 0) {$#3$};
                \node [right=0pt of A] {$#4$};
         \end{tikzpicture}};
\end{tikzpicture}}
For a slightly more advanced example, consider
\begin{equation}
\label{equation.schur comb}
\begin{array}{rcccccccccccc}
    s_2[x_1,x_2] &=& x_1^2 &+& x_1 x_2 &+& x_2^2\\
                 & & \tikztableauscriptsize{{1,1}} & & \tikztableauscriptsize{{1,2}} & & \tikztableauscriptsize{{2,2}}
\end{array}
\end{equation}
where we indicated the semistandard Young tableau which contributes to each term.
Then the plethysm
\begin{equation*}
    \begin{aligned}
        & s_2[s_2[x_1,x_2]] = s_2[x_1^2,x_1x_2,x_2^2] \\[1ex]
        & \arraycolsep=2pt
        \begin{array}{ccccccccccccc}
            & = & {\color{darkgreen} x_1^4}  & + & {\color{darkgreen} x_1^3x_2}  & + & {\color{darkgreen}x_1^2x_2^2}  & + & {\color{darkred}x_1^2 x_2^2}  & 
            + & {\color{darkgreen}x_1 x_2^3}  & + & {\color{darkgreen}x_2^4} \\
            &   & \tikztableauscriptsize{{1,1}} & & \tikztableauscriptsize{{1,2}} & & \tikztableauscriptsize{{1,3}} & & \tikztableauscriptsize{{2,2}} & 
            & \tikztableauscriptsize{{2,3}} & & \tikztableauscriptsize{{3,3}} \\
            &   & \TableauOfTableaux{1}{1}{1}{1} & & \TableauOfTableaux{1}{1}{1}{2} & & \TableauOfTableaux{1}{1}{2}{2} & & \TableauOfTableaux{1}{2}{1}{2} & 
            & \TableauOfTableaux{1}{2}{2}{2} & & \TableauOfTableaux{2}{2}{2}{2}
        \end{array}
        \\[1ex]
        & = {\color{darkgreen}s_4[x_1,x_2]} + {\color{darkred}s_{2,2}[x_1,x_2]}.
    \end{aligned}
\end{equation*}
Note that the first row of semistandard Young tableaux consists of the usual tableaux of shape $(2)$ in the alphabet $\{1,2,3\}$ since the Schur polynomial
in the example depends on three variables. The second row of tableaux takes into account that by~\eqref{equation.schur comb} each variable itself corresponds
to a tableau already, yielding tableaux of tableaux.
\end{example}

\section{The $s$-perp trick}
\label{section.s perp}

This section identifies a technique in symmetric function theory, which we coin the ``\defn{$s$-perp trick}''. It has been used repeatedly in the
literature to establish results in combinatorial representation theory. Here we apply it to Macdonald symmetric
functions and plethysm to compute the monomial and Schur expansions of these symmetric functions.
There are various other algorithms for the computation of plethysm that have appeared in the 
literature~\cite{Agaoka, Carvalho, ChenGarsiaRemmel1984, LangleyRemmel2004, LoehrWarrington2012, Yang1998, Yang2}.
Surprisingly, this method is in many cases more efficient than the current implementation in {\sc SageMath}; in particular, the computation of 
the Schur expansion of a single Schur plethysm of the form $s_\lambda[s_m]$ or $s_\lambda[s_{1^m}]$ is usually faster
using the $s$-perp trick.

To explain the $s$-perp trick, we consider the ring of symmetric functions $\Lambda$
spanned by the Schur basis $s_\lambda$, where $\lambda$ is a partition of a non-negative integer.
The degree of the Schur function indexed by the partition $\lambda$ is given by the size of the partition.
A function $f\in \Lambda$ is of homogeneous degree $d$ if its expansion in the Schur basis only involves Schur 
functions indexed by partitions of $d$. Recall that the ring $\Lambda$ is endowed with a scalar product, where the 
Schur basis is orthonormal. Also from~\eqref{equation.perp}, recall the action of the \defn{$s$-perp operator}.

\begin{definition}
Let $\lambda$ be a partition and $f \in \Lambda$. The \defn{action of $s_\lambda^\perp$} on $f$  is defined by
\[
	s_\lambda^\perp f = \sum_{\mu} \left< f, s_\lambda s_\mu \right> s_\mu~.
\]
\end{definition}

Now, we are ready to present the technique studied in this paper.
\begin{prop}\emph{(\defn{The $s$-perp trick})}
\label{proposition.sperptrick}
Let $f$ and $g$ be two symmetric functions of homogeneous degree $d$.  If
\[
	s_r^\perp f = s_r^\perp g \qquad \text{for all $1 \leqslant r \leqslant d$,}
\]
then $f=g$. The same statement is true if $s_r^\perp$ is replaced by $s_{1^r}^\perp$, mutatis mutandi.
\end{prop}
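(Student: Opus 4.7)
The plan is to reduce to showing that the difference $h := f - g$ is zero. Since $s_r^\perp$ is linear and $f, g$ share the same homogeneous degree $d$, the function $h$ is also homogeneous of degree $d$ and the hypothesis becomes $s_r^\perp h = 0$ for every $1 \leq r \leq d$. (If $d = 0$ the hypothesis is vacuous; I assume $d \geq 1$, where the statement has content.) I also note that $s_r^\perp h = 0$ automatically for $r > d$, because $s_r^\perp$ decreases degree by $r$.

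The main tool I would use is the first identity in \eqref{equation.Xpmt}. Every term on the right-hand side of $h[X+t] = \sum_{r \geq 0} (s_r^\perp h)[X]\, t^r$ vanishes except the $r=0$ term, which equals $h[X]$ (since $s_0 = 1$). Hence $h[X+t] = h[X]$ as an identity in the formal variables. Now I peel off variables: setting $X = 0$ gives $h[t] = h[0] = 0$, because $h$ has no degree-zero part when $d \geq 1$. More generally, substituting $X = x_1 + \cdots + x_{n-1}$ and $t = x_n$ yields $h[x_1 + \cdots + x_n] = h[x_1 + \cdots + x_{n-1}]$, and by induction on $n$, $h$ vanishes on every finite alphabet. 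Since a symmetric function in $\Lambda$ is determined by its evaluations on alphabets of all finite sizes, $h = 0$ and therefore $f = g$.

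For the $s_{1^r}^\perp$ variant, I would apply the standard involution $\omega$. Since $\omega(s_{1^r}) = s_r$ and $\omega$ is a Hall-isometry, a short adjointness check gives $\omega \circ s_\lambda^\perp = s_{\lambda'}^\perp \circ \omega$ for every partition $\lambda$. The hypothesis $s_{1^r}^\perp h = 0$ therefore transports to $s_r^\perp(\omega h) = 0$ for $1 \leq r \leq d$, and applying the argument above to $\omega h$ (still of degree $d$) yields $\omega h = 0$, hence $h = 0$. Alternatively, one can repeat the argument verbatim using the second identity in \eqref{equation.Xpmt}, which gives $h[X - t] = h[X]$, and peel off variables in the same way. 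The whole argument is short and I do not anticipate a serious obstacle; the only care required is the degree bookkeeping (in particular the automatic vanishing $s_r^\perp h = 0$ for $r > d$) and a clean justification that a symmetric function vanishing on every finite alphabet is the zero element of $\Lambda$.
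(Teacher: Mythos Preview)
Your argument is correct. The reduction to $h=f-g$, the use of~\eqref{equation.Xpmt} to conclude $h[X+t]=h[X]$, and the induction on the number of variables are all sound; the edge case $d=0$ and the vanishing of $s_r^\perp h$ for $r>d$ are handled properly. Both treatments of the $s_{1^r}^\perp$ variant (transport by $\omega$, or repeat with the second identity in~\eqref{equation.Xpmt}) are fine.

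The paper takes a different, constructive route: rather than proving uniqueness directly, it points to Algorithm~\ref{algorithm.expandschur}, which explicitly rebuilds the Schur expansion of $f$ from the data $\{s_r^\perp f\}_{1\le r\le d}$ by recovering, for $r$ decreasing from $d$ to $1$, the Schur terms whose indexing partition has first part exactly $r$. Your approach is essentially the ``monomial'' counterpart the paper alludes to just before Equation~\eqref{equation.recurrence} (the unnumbered identity $f(x_1,\ldots,x_n)=\sum_{r\ge 0}(s_r^\perp f)(x_1,\ldots,x_{n-1})x_n^r$), used here as a clean uniqueness argument rather than as a computational device. What you gain is brevity and a self-contained proof that does not rely on verifying an algorithm; what the paper's approach buys is an explicit reconstruction procedure, which is the point of the surrounding discussion and is what gets applied to plethysm computations later.
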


This statement is~\cite[Proposition 6.20.1]{haglund}, where the idea is used to compute
the monomial expansion of $\nabla(e_n)$ in the Shuffle Conjecture.
This technique occurs relatively frequently in the computation of combinatorial formulas
for $q,t$-symmetric functions with labeled lattice paths (e.g. see the proof of
\cite[Theorem 5.2]{IVW} in the recent paper by A. Iraci and A. Vanden Wyngaerd).

Proposition~\ref{proposition.sperptrick} can be interpreted as a recursive method to determine a symmetric function $f$ 
by knowing the expressions $s_r^\perp f$ for each $r$ between $1$ and the degree of $f$. In fact, the proof of this proposition 
follows from the algorithm presented in Section~\ref{section.schur},
which provides a method for recovering the monomial (Equation \eqref{equation.recurrence}) and Schur
expansions of $f$ from the symmetric functions $s_r^\perp f$. 

\vspace{0.2cm}

As we mentioned at the beginning of this section,
this technique has been used repeatedly in the literature.
For instance, one manifestation of the $s$-perp trick
is to give a monomial expansion of a symmetric polynomial using the recurrence
\begin{equation*}
	f(x_1, x_2, \ldots, x_n) = \sum_{r \geqslant 0} (s_r^\perp f)(x_1, x_2, \ldots, x_{n-1}) x_n^r~.
\end{equation*}
This is the method for computing the monomial expansion of
the modified Macdonald symmetric functions ${\tilde H}_\lambda[X;q,t]$
(see \cite{GarsiaHaiman})
given by F. Bergeron and M. Haiman in \cite[Proposition~5]{BerHai}.
They provide an explicit formula for the coefficient $d^{(r)}_{\lambda\nu}$ in the expression
defined by
\[
	s_r^\perp {\tilde H}_\lambda[X;q,t] = \sum_{\nu} d^{(r)}_{\lambda\nu} {\tilde H}_{\nu}[X;q,t]~.
\]
Denoting ${\overline \mu} = (\mu_2, \mu_3, \ldots, \mu_{\ell(\mu)})$, it
follows that if we denote the coefficient of $m_\mu$ in ${\tilde H}_\lambda[X;q,t]$ by $L_{\lambda\mu}$, 
then it can be computed with the recursive formula\footnote{
This algorithm was implemented in
2015 as the default method for computing the ${\tilde H}_{\mu}[X;q,t]$ symmetric
functions in the computer algebra system {\sc SageMath} \cite{sagemath} and
there was roughly a $2\times$ speedup against the previous most efficient
method for computing these functions.}
\begin{equation}\label{equation.recurrence}
	L_{\lambda\mu} = \sum_\beta d^{(\mu_1)}_{\lambda \beta} L_{\beta{\overline \mu}}~.
\end{equation}

Another example of the $s$-perp trick used in combinatorial representation
theory appears in a paper by A. Garsia and C. Procesi~\cite{GP}, where the authors show
 that the Frobenius characteristic
of a certain quotient module is equal to the Hall--Littlewood symmetric function.
Let $H_\lambda[X;q] = \sum_\mu K_{\lambda\mu}(q) s_\mu$
be the Hall--Littlewood symmetric function with
$K_{\lambda\mu}(q)$ representing the $q$-Kostka coefficient (see \cite[Section III.6]{Macdonald})
and let $C_\lambda[X;q]$ be the Frobenius characteristic of a certain
module. It is shown in~\cite{GP} that since 
$s_{1^r}^\perp H_\lambda[X;q]$ and $s_{1^r}^\perp C_\lambda[X;q]$ satisfy
exactly the same recursive expression, the identity $H_\lambda[X;q] = C_\lambda[X;q]$ holds.

Iraci, Rhoades and Romero use an almost identical method \cite[Lemma 3.1]{IRR} to prove that
the diagonal fermionic coinvariants $\bigwedge\{ \theta_n, \xi_n \}/I^+$
have Frobenius image of a special case of the Theta conjecture \cite[Conjecture 9.1]{DIW}.

We will apply this technique to the application of computing the plethysm of symmetric functions.

\section{Application: Schur expansions}
\label{section.schur}

In the previous section, we saw that the monomial expansion
of a symmetric function $f$ can be computed by recursively computing the
monomial expansion of $s_r^\perp f$.  In this section,
we state an algorithm for computing the Schur expansion
of a symmetric function $f$ by recursively computing the
Schur expansion of $s_r^\perp f$ (respectively $s_{1^r}^\perp f$).

Let $\mathsf{addrow}(s\_expansion, r)$ be a function which takes as input
a Schur expansion of an element of $\Lambda$ and a positive integer $r$.
If each partition indexing the terms in the Schur expansion of $f$ has first part less than
or equal to $r$, then return the expression with each indexing
partition having a row of size $r$ appended to it, or $0$ otherwise.

\begin{remark}
The idea for this algorithm occurred to us because we had a symmetric group module
and we could combinatorially describe $s_1^\perp f$, where $f$ is the Frobenius
image of this module. From this we wanted to
deduce a combinatorial formula for $f$.
In general, $s_1^\perp f$ does not characterize $f$ (e.g. consider
the symmetric functions $s_{22} + s_4$ and $s_{31}$ which both
satisfy $s_1^\perp(s_{22} + s_4) = s_{21} + s_3 = s_1^\perp(s_{31})$).
However the idea seemed quite close.  Then
we asked what additional information would we need to recover $f$
and realized that there is a general technique that is regularly
used in symmetric function theory that could be identified.
\end{remark}

\begin{algorithm}
    \SetKwInOut{Input}{Input}
    \SetKwInOut{Output}{Output}

    \underline{function ExpandSchur} $A$\;
    \Input{$A$ - array with $A[r]$ the Schur expansion of $s_r^\perp f$ for $1 \leqslant r \leqslant \mathsf{degree}(f)$}
    \Output{$out$ - a Schur expansion of $f$}
    \Begin{
    $out \longleftarrow 0$;\\
    \For{$r = length(A)$ downto $1$}{
        $out \longleftarrow out + \mathsf{addrow}(A[r] - s_r^\perp(out), r)$;
    }
    return $out$;
    }
    \caption{An algorithm for finding the Schur expansion of $f$ from the Schur expansion
    of $s_r^\perp f$ for $1 \leqslant r \leqslant \mathsf{degree}(f)$}
    \label{algorithm.expandschur}
\end{algorithm}

Algorithm \ref{algorithm.expandschur} works because when $r=k$ at line $4$,
$out$ is equal to the sum of the terms of the Schur expansion of $f$ which
have first part of the indexing partition greater than or equal to $k$ and so
$f-out$ is in the linear span of Schur functions with parts less than or equal to $k$.
Hence
\[
	\mathsf{addrow}(A[k] - s_k^\perp(out),k) = \mathsf{addrow}(s_k^\perp(f - out),k)
\]
is equal to the sum of terms in the Schur expansion of $f$ with
first part of the indexing partition equal to $k$.
When the $for$ loop completes, $out$ will equal the Schur expansion of $f$.

This algorithm can be modified to deduce the Schur expansion of
$f$ from the Schur expansion of $s_{1^r}^\perp f$, for $1 \leqslant r \leqslant \mathsf{degree}(f)$,
by replacing the function $\mathsf{addrow}$ with $\mathsf{addcol}$, which adds a column
on the partitions indexing the expansions of the Schur functions.

\section{Application to plethysm}
\label{section.plethysm}

\subsection{$s$-perp formulae for plethysm}
Equation \eqref{equation.Xpmt} can be used to compute $s_r^\perp$ or $s_{1^r}^\perp$
acting on plethysms.  The expression that we will
specialize for our application is the following identity.
\begin{prop}\label{proposition.master1}
For partitions $\lambda$ and $\mu$ and a positive integer $r$,
\begin{equation}\label{equation.identityinfinity}
s_r^\perp s_\lambda[s_\mu]
=\sum_{\nu^{(i)}} s_{\nu^{(0)}}[s_\mu]
s_{\nu^{(1)}}[s_1^\perp s_\mu]
s_{\nu^{(2)}}[s_2^\perp s_\mu] \cdots
s_{\nu^{(r')}}[s_{r'}^\perp s_\mu]
\left< s_\lambda, s_{\nu^{(0)}} s_{\nu^{(1)}} \cdots s_{\nu^{(r')}}\right>
\end{equation}
where $r' = \min(r, \mu_1)$ and the sum is over all sequences of partitions
$(\nu^{(0)}, \nu^{(1)}, \ldots, \nu^{(r')})$
with $\sum_{i=0}^r |\nu^{(i)}| = |\lambda|$ and $\sum_{i=0}^r i |\nu^{(i)}| = r$.
\end{prop}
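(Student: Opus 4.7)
The plan is to derive the identity by applying the two expansion rules \eqref{equation.Xpmt} and \eqref{equation.LR} to the composition $s_\lambda[s_\mu]$ evaluated at the plethystic alphabet $X+t$, and then extracting the coefficient of $t^r$.

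First I would use \eqref{equation.Xpmt} on the inner symmetric function to write
\[
s_\mu[X+t] = \sum_{i \geqslant 0} (s_i^\perp s_\mu)[X]\, t^i.
\]
Since $s_i^\perp s_\mu$ is nonzero only when $\mu/\nu$ can be a horizontal strip of size $i$ for some $\nu$, and the largest such $i$ is $\mu_1$, the sum truncates at $i = \mu_1$. Next I would invoke the associativity of plethysm $(s_\lambda[s_\mu])[X+t] = s_\lambda[\,s_\mu[X+t]\,]$ together with the Littlewood--Richardson-type identity \eqref{equation.LR} applied to the decomposition of $s_\mu[X+t]$ above, setting $A_i := (s_i^\perp s_\mu)[X]\, t^i$ for $0 \leqslant i \leqslant \mu_1$. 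This produces
\[
s_\lambda\bigl[s_\mu[X+t]\bigr] = \sum_{\nu^{(*)}} s_{\nu^{(0)}}[A_0]\, s_{\nu^{(1)}}[A_1] \cdots s_{\nu^{(\mu_1)}}[A_{\mu_1}] \left\langle s_\lambda,\, s_{\nu^{(0)}} s_{\nu^{(1)}} \cdots s_{\nu^{(\mu_1)}} \right\rangle,
\]
summed over tuples with $\sum_i |\nu^{(i)}| = |\lambda|$.

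The next step is to pull the scalar $t^i$ out of each plethysm $s_{\nu^{(i)}}[A_i]$. Since each $s_i^\perp s_\mu$ is homogeneous, the second identity of \eqref{equation.omegatfactor} (applied with the scalar $t^i$ rather than $t$) gives $s_{\nu^{(i)}}\bigl[t^i \cdot (s_i^\perp s_\mu)[X]\bigr] = t^{i|\nu^{(i)}|}\, s_{\nu^{(i)}}[s_i^\perp s_\mu][X]$. Combining these yields
\[
s_\lambda\bigl[s_\mu[X+t]\bigr] = \sum_{\nu^{(*)}} t^{\sum_i i|\nu^{(i)}|}\, s_{\nu^{(0)}}[s_\mu] \cdots s_{\nu^{(\mu_1)}}[s_{\mu_1}^\perp s_\mu] \left\langle s_\lambda,\, s_{\nu^{(0)}} \cdots s_{\nu^{(\mu_1)}} \right\rangle.
\]
Finally, applying \eqref{equation.Xpmt} directly to $f = s_\lambda[s_\mu]$ gives $s_\lambda[s_\mu][X+t] = \sum_{r \geqslant 0} \bigl(s_r^\perp s_\lambda[s_\mu]\bigr)[X]\, t^r$, so extracting the coefficient of $t^r$ on both sides isolates precisely those tuples $\nu^{(*)}$ with $\sum_i i|\nu^{(i)}| = r$. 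The upper index can then be capped at $r' = \min(r, \mu_1)$: we need $i \leqslant \mu_1$ for $s_i^\perp s_\mu \neq 0$, and we need $i \leqslant r$ for a nontrivial $\nu^{(i)}$ to satisfy the $t$-degree constraint.

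The main obstacle is the careful bookkeeping in the scaling step: one must justify treating $t^i$ as a scalar inside the plethysm, which depends on the homogeneity of $s_i^\perp s_\mu$ and the extension of $f[tX] = t^{\deg f} f[X]$ to arbitrary homogeneous arguments. The rest of the argument is a direct combination of \eqref{equation.Xpmt}, \eqref{equation.LR}, and the associativity of plethysm, followed by a routine comparison of $t^r$-coefficients.
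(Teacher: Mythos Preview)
Your proof is correct and follows essentially the same route as the paper's: expand $s_\mu[X+t]$ via \eqref{equation.Xpmt}, feed the resulting finite sum into \eqref{equation.LR}, extract the powers of $t$ using \eqref{equation.omegatfactor}, and compare with the expansion of $(s_\lambda[s_\mu])[X+t]$ to read off the coefficient of $t^r$. The only cosmetic difference is that the paper truncates the inner expansion at $r'=\min(r,\mu_1)$ from the outset, whereas you truncate at $\mu_1$ and impose the bound $i\leqslant r$ at the end; one small wording fix is that in the scaling step the relevant homogeneity is that of $s_{\nu^{(i)}}$ (of degree $|\nu^{(i)}|$), not of $s_i^\perp s_\mu$.
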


\begin{proof}
If we let $r' = \min(r, \mu_1)$, then by Equation \eqref{equation.Xpmt} we have
\[
    s_\mu[X+t] = s_\mu + t s_1^\perp s_\mu + t^2 s_2^\perp s_\mu + \cdots + t^{r'} s_{r'}^\perp s_\mu.
\]
Now Equation \eqref{equation.identityinfinity} follows from an application of
Equation \eqref{equation.LR}, factoring out a power of $t$ using
the right expression of Equation \eqref{equation.omegatfactor}
and then taking a coefficient of $t^r$ on both sides
of the equation.
\end{proof}

A nearly identical proof will derive a similar identity for
$s_{1^r}^\perp s_\lambda[s_\mu]$ by applying Equation~\eqref{equation.Xpmt}
and in addition using Equation \eqref{equation.omegatfactor}
so that in the product we have terms of the form $(\omega s_{\nu^{(k)}})[ s_{1^k}^\perp s_\mu]
= s_{(\nu^{(k)})'}[ s_{1^k}^\perp s_\mu]$
if $k$ is odd and $s_{\nu^{(k)}}[ s_{1^k}^\perp s_\mu ]$ if $k$ is even.
\begin{prop}\label{proposition.master2}
For partitions $\lambda$ and $\mu$ and a positive integer $r$,
$s_{1^r}^\perp s_\lambda[s_\mu]$ is equal to
\begin{equation}\label{equation.identityinfinitym1}
\sum_{\nu^{(i)}} s_{\nu^{(0)}}[s_\mu]
(\omega s_{\nu^{(1)}})[s_1^\perp s_\mu]
s_{\nu^{(2)}}[s_{1^2}^\perp s_\mu] \cdots
(\omega^{r'} s_{\nu^{(r')}})[s_{1^{r'}}^\perp s_\mu]
\left< s_\lambda, s_{\nu^{(0)}} s_{\nu^{(1)}} \cdots s_{\nu^{(r')}}\right>
\end{equation}
where $r' = \min(r, \ell(\mu))$ and the sum is over all sequences of partitions
$(\nu^{(0)}, \nu^{(1)}, \ldots, \nu^{(r')})$
with $\sum_{i=0}^r |\nu^{(i)}| = |\lambda|$ and $\sum_{i=0}^r i |\nu^{(i)}| = r$.
\end{prop}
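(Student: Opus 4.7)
The plan is to mimic the proof of Proposition~\ref{proposition.master1}, with the main adjustment being the careful handling of the alternating signs and the appearance of the involution $\omega$.

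First, I would start from Equation~\eqref{equation.Xpmt}, which gives
\[
    s_\mu[X-t] \;=\; \sum_{k \geqslant 0} (s_{1^k}^\perp s_\mu)[X] \, (-t)^k.
\]
Since $s_{1^k}^\perp s_\mu = 0$ whenever $k > \ell(\mu)$, the sum is finite and the range of $k$ can eventually be truncated to $0 \leqslant k \leqslant \min(r, \ell(\mu)) = r'$. By the associativity of plethysm, $s_\lambda\bigl[s_\mu[X-t]\bigr] = (s_\lambda[s_\mu])[X-t]$, and applying \eqref{equation.Xpmt} once more to the right-hand side expresses this as $\sum_{r \geqslant 0}(s_{1^r}^\perp s_\lambda[s_\mu])[X] (-t)^r$. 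The strategy is to expand the left-hand side via Equation~\eqref{equation.LR} and match coefficients of $(-t)^r$.

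Next, I would apply \eqref{equation.LR} with $A_k := (-t)^k (s_{1^k}^\perp s_\mu)[X]$, yielding
\[
    s_\lambda\bigl[s_\mu[X-t]\bigr] \;=\; \sum_{\nu^{(*)}} \prod_{k=0}^{r'} s_{\nu^{(k)}}\bigl[(-t)^k (s_{1^k}^\perp s_\mu)\bigr] \,\bigl\langle s_\lambda,\, s_{\nu^{(0)}} s_{\nu^{(1)}} \cdots s_{\nu^{(r')}}\bigr\rangle.
\]
For each factor, I would use both halves of Equation~\eqref{equation.omegatfactor}: the identity $f[tX] = t^{\mathsf{degree}(f)} f[X]$ lets me pull out $t^{k|\nu^{(k)}|}$, while $f[-X] = (-1)^{\mathsf{degree}(f)}(\omega f)[X]$ handles the sign when $k$ is odd. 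The result is
\[
    s_{\nu^{(k)}}\bigl[(-t)^k (s_{1^k}^\perp s_\mu)\bigr] \;=\; (-1)^{k|\nu^{(k)}|}\, t^{k|\nu^{(k)}|} \,(\omega^k s_{\nu^{(k)}})[s_{1^k}^\perp s_\mu],
\]
where I use $(-1)^{k|\nu^{(k)}|} = (-1)^{|\nu^{(k)}|}$ for odd $k$ (matching the application of $\omega$) and $= 1$ for even $k$.

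Finally, I would extract the coefficient of $(-t)^r = (-1)^r t^r$. Grouping terms with $\sum_k k|\nu^{(k)}| = r$, and using the parity identity $r = \sum_k k|\nu^{(k)}| \equiv \sum_{k \text{ odd}} |\nu^{(k)}| \pmod{2}$, the global sign $(-1)^{\sum_{k\text{ odd}} |\nu^{(k)}|}$ accumulated from the product exactly cancels the factor $(-1)^r$ coming from $(-t)^r$. What remains is Equation~\eqref{equation.identityinfinitym1}. The main subtlety — and the only step that is genuinely different from the proof of Proposition~\ref{proposition.master1} — is bookkeeping these signs and recognizing that the seemingly stray $(-1)^r$ is absorbed by this parity congruence, leaving the clean form with only the $\omega^k$ factors.
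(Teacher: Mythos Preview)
Your proposal is correct and follows the same route the paper indicates: apply the $f[X-t]$ identity from \eqref{equation.Xpmt} to both $s_\mu$ and $s_\lambda[s_\mu]$, expand via \eqref{equation.LR}, use \eqref{equation.omegatfactor} to extract the $t$- and sign-factors, and match coefficients. Your sign bookkeeping via the parity congruence is fine, though it is slightly more than you need: since $\prod_k (-1)^{k|\nu^{(k)}|} t^{k|\nu^{(k)}|} = (-t)^{\sum_k k|\nu^{(k)}|} = (-t)^r$ directly, the cancellation with the $(-t)^r$ on the other side is immediate without invoking the congruence.
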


Applying Algorithm \ref{algorithm.expandschur} to either
Proposition \ref{proposition.master1} or \ref{proposition.master2}
will have too many terms, and so it cannot be considered an efficient method for computing plethysms in general. However, 
special cases of these identities can be used to recursively compute
classes of plethysms much more efficiently than the standard
algorithms.

Special cases of Equations \eqref{equation.identityinfinity} and \eqref{equation.identityinfinitym1} include
\begin{align*}
    s_{r}^\perp s_\lambda[ s_{1^w} ] &= \sum_{\mu, \gamma} c^\lambda_{\mu\gamma}
    s_\mu[s_{1^w}] s_\gamma[s_{1^{w-1}}]
    &
    s_{1^r}^\perp s_\lambda[ s_w ] &= \sum_{\mu, \gamma} c^\lambda_{\mu\gamma'}
    s_\mu[s_w] s_\gamma[s_{w-1}],
\end{align*}
where the sums are over partitions $\mu$ and $\gamma$ with $|\mu| = |\lambda| - r$ and
$|\gamma| = r$ and $c^\lambda_{\mu\gamma} := \left< s_\lambda, s_\mu s_\gamma\right>$
are the Littlewood--Richardson coefficients.

While we saw improvements in recursive computations with these formulas,
even smaller classes of functions are the special cases:
\begin{align}\label{equation.colrows}
	s_r^\perp s_{1^h}[s_{1^w}] &= s_{1^{h-r}}[s_{1^w}] s_{1^r}[s_{1^{w-1}}]&
    s_{1^r}^\perp s_{h}[s_{w}] &= s_{{h-r}}[s_{w}] s_{1^r}[s_{w-1}]\\
    \label{equation.rowscol}
	s_r^\perp s_{h}[s_{1^w}] &= s_{{h-r}}[s_{1^w}] s_{r}[s_{1^{w-1}}]&
    s_{1^r}^\perp s_{1^h}[s_{w}] &= s_{1^{h-r}}[s_{w}] s_{r}[s_{w-1}]~.
\end{align}

We compared this method with the current method used by the {\sc SageMath}
computer algebra system \cite{sagemath} and found it to be significantly faster
for computing the plethysms $s_{1^h}[s_{1^w}]$. The current implementation in
{\sc SageMath} uses the definition stated in Section~\ref{section.notation},
which requires a change of basis from the Schur functions to power sums and
back again.
An implementation of our method in {\sc SageMath} was able to compute
$s_{1^4}[s_{1^6}]$ in less than a second compared to 36 seconds with the
current implementation in {\sc SageMath}; and to compute $s_{1^6}[s_{1^6}]$ it took 21
seconds compared to well over an hour.

\subsection{A formula for $s_\lambda[s_k]$ for $|\lambda|\leqslant 3$}

Quasi-polynomial and integer polytope expressions for the coefficients of $s_\mu$ in
$s_\lambda[s_k]$ with $|\lambda| \leqslant 3$ and $k$ a positive integer have been extensively studied
\cite{Agaoka2002, ChenGarsiaRemmel1984, DentSiemons2000, Howe1987, KahleMichalek2016, Plunkett1972,    
Tetreault2020, Thrall1942}.  The quasi-polynomial expressions are
probably the most efficient means of computation
of a single coefficient in this expression.  In \cite{KahleMichalek2018}, the authors use the
expressions for $s_3[s_k]$ and $s_k[s_3]$ to conclude that
certain families of these coefficients do not need to be given by
Ehrhart functions of rational polytopes.

In the following theorem we use the $s$-perp trick and some of the properties
and analysis that others have
derived about these formulae to give a combinatorial interpretation in terms of semi-standard
Young tableaux.  This formulation gives an idea of what we expect a
combinatorial interpretation for the plethysm coefficients should look like in general.
The result was obtained independently using different methods by Florence
Maas-Gari\'epy and \'Etienne T\'etreault \cite{MaasGariepyTetreault},
and our proof makes use of one of their earlier results \cite{Tetreault2020}.
Let $\mathsf{SSYT}_{(k,k,k)}$ represent the set of semi-standard Young tableaux
with $k$ $1$s, $k$ $2$s and $k$ $3$s and denote the shape
of a tableau by $\mathsf{shape}(S)$.

\def\tabsizethree{{\Large\Bigl\{}\scalebox{0.8}{\young{1&2&3\cr}},
\raisebox{-.05in}{\scalebox{0.8}{\young{3\cr1&2\cr}}},
\raisebox{-.05in}{\scalebox{0.8}{\young{2\cr1&3\cr}}},
\raisebox{-.1in}{\scalebox{0.8}{\young{3\cr2\cr1\cr}}} {\Large\Bigr\}}}

\begin{theorem}\label{th:case3}
\squaresize=11pt
Let $T \in \tabsizethree$ be a standard tableau with shape
a partition of 3. Then for any $k\geqslant 1$,
\begin{equation}\label{eq:typesum}
	s_{\mathsf{shape}(T)}[s_k] = \sum_{\substack{S \in \mathsf{SSYT}_{(k,k,k)}\\
	\mathsf{type}(S)=T}} s_{\mathsf{shape}(S)},
\end{equation}
where $\mathsf{type}(S)$ for $S \in \mathsf{SSYT}_{(k,k,k)}$ is given in Definition~\ref{def:type} below.
\end{theorem}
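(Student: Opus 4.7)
The plan is to apply the $s$-perp trick of Proposition~\ref{proposition.sperptrick}, inducting on $k$. Writing $L_T := s_{\mathsf{shape}(T)}[s_k]$ and $R_T := \sum_{S \in \mathsf{SSYT}_{(k,k,k)},\, \mathsf{type}(S) = T} s_{\mathsf{shape}(S)}$, both sides are homogeneous of degree $3k$, so it suffices to verify $s_r^\perp L_T = s_r^\perp R_T$ for $1 \le r \le 3k$. The base case $k = 1$ should reduce to the tautology $s_\lambda[s_1] = s_\lambda$ on the left, which matches the right once the definition of $\mathsf{type}$ is read through on $\mathsf{SSYT}_{(1,1,1)}$ (where the set is in obvious bijection with standard tableaux of size $3$).

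For the LHS, the specialized identities \eqref{equation.rowscol} handle the cases $\mathsf{shape}(T) \in \{(3), (1^3)\}$ directly, giving factorizations such as
\[
    s_r^\perp s_3[s_k] \;=\; s_{3-r}[s_k] \cdot s_r[s_{k-1}].
\]
For $\mathsf{shape}(T) = (2,1)$, I would invoke Proposition~\ref{proposition.master1} and use that the Littlewood--Richardson coefficients $\left<s_{2,1}, s_{\nu^{(0)}} s_{\nu^{(1)}} \cdots\right>$ restrict the expansion to a short list of products of the form $s_\mu[s_k] \cdot s_\nu[s_{k-1}]$ with $|\mu| + |\nu| \le 3$. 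In every case, the resulting terms are again plethysms of the form $s_\alpha[s_{k-1}]$ with $|\alpha| \le 3$, so the inductive hypothesis on $k$ produces their Schur expansions.

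For the RHS, $s_r^\perp s_{\mathsf{shape}(S)}$ is a sum over partitions obtained by removing a horizontal $r$-strip from $\mathsf{shape}(S)$. The combinatorial task is to define a bijection taking each pair $(S, \eta)$, with $S \in \mathsf{SSYT}_{(k,k,k)}$ of type $T$ and $\eta$ a removable horizontal $r$-strip, to a pair of tableaux with smaller content whose shapes record the two factors appearing in the LHS recursion. The map must be compatible with $\mathsf{type}$: in particular, for the two standard tableaux $T$ of shape $(2,1)$ it must partition the contributing $S$'s into two equinumerous but distinct pieces.

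The main obstacle is aligning the combinatorics of the $\mathsf{type}$-indexed partition of $\mathsf{SSYT}_{(k,k,k)}$ with the algebraic factorization $s_{h-r}[s_k] \cdot s_r[s_{k-1}]$ on the LHS, and specifically ensuring that strip removal respects $\mathsf{type}$. To surmount this, I would appeal to T\'etreault's explicit Schur expansion of $s_\lambda[s_k]$ for $|\lambda| \le 3$ from~\cite{Tetreault2020} as a bridge: compare Tétreault's coefficients $\left<s_{\mathsf{shape}(T)}[s_k], s_\mu\right>$ shape-by-shape with $\#\{S : \mathsf{type}(S) = T,\ \mathsf{shape}(S) = \mu\}$ coming from Definition~\ref{def:type}, thereby verifying each of the four instances of \eqref{eq:typesum} and simultaneously certifying the bijection required by the $s$-perp recursion.
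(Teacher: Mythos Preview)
Your proposal contains a concrete error that breaks the inductive scheme. The identity you quote,
\[
  s_r^\perp s_3[s_k] \;=\; s_{3-r}[s_k]\cdot s_r[s_{k-1}],
\]
is not one of the identities in~\eqref{equation.colrows}--\eqref{equation.rowscol}. Those factorizations hold only when the inner plethysm is a \emph{column} (for $s_r^\perp$) or when you apply $s_{1^r}^\perp$ to an inner \emph{row}; they rely on $r'=\min(r,\mu_1)=1$ (resp.\ $r'=\min(r,\ell(\mu))=1$) in Propositions~\ref{proposition.master1}--\ref{proposition.master2}. For $s_r^\perp s_h[s_k]$ with $k\geqslant 2$ the expansion from Proposition~\ref{proposition.master1} has many terms $s_{\nu^{(i)}}[s_{k-i}]$ for $0\leqslant i\leqslant \min(r,k)$. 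A small check already fails: $s_2^\perp s_2[s_2]=s_2^\perp(s_4+s_{22})=2s_2$, whereas your formula gives $s_0[s_2]\cdot s_2[s_1]=s_2$. Consequently the ``pair of factors'' picture you want for the bijection on the RHS does not exist, and the induction on $k$ as you set it up cannot close: the correct $s_r^\perp$ expansion involves $s_\alpha[s_{k-2}],\,s_\alpha[s_{k-3}],\ldots$, not just $s_\alpha[s_{k-1}]$.

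The paper's proof avoids exactly this obstruction. It never tries to run the full $s_r^\perp$ recursion for each $T$. For $T$ of shape $(3)$ it uses the \emph{column} operator, where the factorization \emph{is} available: $s_{1^3}^\perp s_3[s_{k+1}]=s_{1^3}[s_k]$ from~\eqref{equation.colrows}, and this single relation handles the length-$3$ terms while T\'etreault's formula (packaged as Lemma~\ref{lem:works123}) pins down the length $\leqslant 2$ terms. The remaining shapes are not attacked via $s$-perp at all, but via the plethystic Pieri relations
\[
  s_2[s_{k+1}]\,s_{k+1}=s_3[s_{k+1}]+s_{21}[s_{k+1}],\qquad
  s_{11}[s_{k+1}]\,s_{k+1}=s_{21}[s_{k+1}]+s_{111}[s_{k+1}],
\]
together with the classical formulas for $s_2[s_{k+1}]$ and $s_{11}[s_{k+1}]$ and the bijection of Lemma~\ref{lem:312eq213} between the two type-$(2,1)$ sets. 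Your final fallback of matching coefficients shape-by-shape against \cite{Tetreault2020} is a different argument altogether and, as stated, amounts to re-deriving the theorem from an external source rather than via the $s$-perp trick.
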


\begin{definition} \label{def:type}
\squaresize=11pt
For $S \in \mathsf{SSYT}_{(k,k,k)}$, let $N_r$ represent the number of cells with the label $r$ in the second row of $S$ for $1\leqslant r \leqslant 3$.
\begin{itemize}
    \item If $S$ is standard and $k=1$, then let $\mathsf{type}(S)=S$.

    \item If $S$ has one or two rows, then we define
\begin{enumerate}[label=(\alph*)]
\item $\mathsf{type}(S) = \scalebox{0.8}{\young{1&2&3\cr}}$ if $N_{2}$ is even, $N_{3} \geqslant 2 N_{2}$,
but $N_{3} \neq 2 N_{2} +1$.
\item $\mathsf{type}(S) = \scalebox{0.8}{\young{3\cr2\cr1\cr}}$ if $N_{2}$ is odd, $N_{3} \geqslant 2 N_{2}$,
but $N_{3} \neq 2 N_{2} +1$.
\item $\mathsf{type}(S) = \scalebox{0.8}{\young{3\cr1&2\cr}}$ if $N_{2}$ is even, and $N_{3} < 2 N_{2}$
or $N_{3} = 2 N_{2} +1$.
\item $\mathsf{type}(S) = \scalebox{0.8}{\young{2\cr1&3\cr}}$ if $N_{2}$ is odd, and $N_{3} < 2 N_{2}$
or $N_{3} = 2 N_{2} +1$.
\end{enumerate}

    \item
If $S$ has three rows and $\overline S$ is $S$ with the first column removed, then
$\mathsf{type}(S) = \mathsf{type}(\overline S)^t$.
\end{itemize}
For $T$ a standard tableau of size $3$, let
$$\mathsf{Tab}_{T,k} = \{ S \in \mathsf{SSYT}_{(k,k,k)} \mid \mathsf{type}(S) = T \}.$$
\end{definition}

Define a linear operator on symmetric functions as
$$s_\mu \downarrow_k = \begin{cases} s_\mu &\hbox{ if }\ell(\mu) \leqslant k,\\
0 & \hbox{ else.}
\end{cases}$$

Also define the following bilinear (and commutative) operator by
$$s_\mu \odot s_\lambda = s_{\mu + \lambda},$$
where the sum of the two partitions is done componentwise, adding zeros if necessary.

\begin{lemma} \label{lem:works123} 
For $k\geqslant 1$,
\squaresize=11pt
$$s_3[s_k] \downarrow_2 = \sum_S s_{\mathsf{shape}(S)},$$
where the sum is over all $S \in \mathsf{Tab}_{\scalebox{0.8}{\young{1&2&3\cr}},k}$ with
$\ell(S) \leqslant 2$.
\end{lemma}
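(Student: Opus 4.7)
The plan is to prove the equality by restricting to the two-variable polynomial algebra. Both sides are symmetric functions whose Schur expansions are supported on partitions of length at most $2$, and such functions are uniquely determined by their evaluations at $(x_1, x_2)$. So it suffices to establish the equality as polynomials in $x_1, x_2$ and then recover the Schur coefficients using the two-variable Schur formula $s_{(a,b)}(x_1,x_2) = \sum_{m=b}^{a} x_1^{a+b-m} x_2^{m}$.

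For the left-hand side, the plethystic definition together with $s_k(x_1, x_2) = x_1^k + x_1^{k-1} x_2 + \cdots + x_2^k$ gives
\[
s_3[s_k](x_1, x_2) = h_3(x_1^k, x_1^{k-1}x_2, \ldots, x_2^k) = \sum_{0 \leqslant i \leqslant j \leqslant l \leqslant k} x_1^{3k-i-j-l}\, x_2^{\,i+j+l},
\]
so the coefficient of $x_1^{3k - n} x_2^n$ equals $p(n;k)$, the number of partitions of $n$ into at most three parts each at most $k$.

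For the right-hand side, I would parametrize each $S \in \mathsf{SSYT}_{(k,k,k)}$ with $\ell(\mathsf{shape}(S)) \leqslant 2$ by the pair $(N_2, N_3)$ from Definition~\ref{def:type}, subject to the SSYT constraints $0 \leqslant N_2 \leqslant k$ and $0 \leqslant N_3 \leqslant \min(k,\, 2(k - N_2))$; the shape of $S$ is then $(3k - N_2 - N_3,\, N_2 + N_3)$. By Definition~\ref{def:type}(a), $S$ lies in $\mathsf{Tab}_{\scalebox{0.8}{\young{1&2&3\cr}},k}$ precisely when $N_2$ is even, $N_3 \geqslant 2 N_2$, and $N_3 \neq 2N_2 + 1$. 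Expanding each term $s_{(3k - N_2 - N_3,\, N_2 + N_3)}(x_1, x_2)$ via the two-variable Schur formula reduces the lemma to the combinatorial identity
\[
p(n;k) - p(n-1;k) = \#\bigl\{(N_2, N_3) \mid N_2 + N_3 = n \text{ and all the conditions above hold}\bigr\},
\]
for every $0 \leqslant n \leqslant \lfloor 3k/2 \rfloor$, with the convention $p(-1;k) = 0$.

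The main obstacle I expect is constructing a bijection realizing this identity. A natural attempt partitions the triples $(i,j,l)$ counted by $p(n;k)$ according to the parity of some distinguished index and the relative sizes of the parts, mirroring the cases of Definition~\ref{def:type}; the delicate point is that the exclusion $N_3 \neq 2N_2 + 1$ corresponds exactly to borderline triples that must be reallocated to the other standard tableaux appearing in Theorem~\ref{th:case3}, and these boundary shifts need to be tracked carefully. As a cleaner alternative, one can invoke the $s$-perp trick via Equation~\eqref{equation.rowscol}, obtaining $s_r^\perp s_3[s_k] = s_{3-r}[s_k] \cdot s_r[s_{k-1}]$, and induct on $k$ from the base case $k = 1$, where both sides equal $s_3$; here the technical difficulty is to track the interaction of $s_r^\perp$ with the truncation $\downarrow_2$, which do not commute in general. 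Given the acknowledged reliance on~\cite{Tetreault2020}, the likely actual route is to restrict Tétreault's closed-form description of the coefficients $[s_\nu]\,s_3[s_k]$ to partitions $\nu$ of length at most $2$ and verify directly that it matches the $(N_2, N_3)$-count above.
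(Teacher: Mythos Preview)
Your proposal sketches three strategies but completes none of them, so as written it is not a proof.

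The two-variable reduction is sound, and the identity $p(n;k)-p(n-1;k)=\#\{(N_2,N_3):\ldots\}$ you isolate is indeed correct; however you explicitly leave its verification open, so the main line has a genuine gap at exactly the point where the work lies.

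Your $s$-perp aside contains an actual error: the formula $s_r^\perp s_3[s_k]=s_{3-r}[s_k]\,s_r[s_{k-1}]$ is false for $r\geqslant 2$ (try $r=k=2$: the left side is $2s_4+2s_{22}$, the right side is $s_4+s_{31}+s_{22}$). Equation~\eqref{equation.rowscol} concerns $s_h[s_{1^w}]$, not $s_h[s_w]$; the identity that applies to $s_h[s_w]$ is the one in Equation~\eqref{equation.colrows}, namely $s_{1^r}^\perp s_h[s_w]=s_{h-r}[s_w]\,s_{1^r}[s_{w-1}]$, with $s_{1^r}^\perp$ rather than $s_r^\perp$.

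Your final guess is the closest to what the paper does, though the mechanism is a recurrence rather than a closed form. The paper quotes from \cite[Theorem~2.1]{Tetreault2020} the relation
\[
s_3[s_k]\downarrow_2 \;=\; s_{66}\odot s_3[s_{k-4}]\downarrow_2 \;+\; \sum_{r=2}^{k} s_{3k-r,r} \;+\; s_{3k},
\]
and then checks that the two-row tableaux in $\mathsf{Tab}_{\scalebox{0.8}{\young{1&2&3\cr}},k}$ obey the same recurrence. Concretely, such a tableau either has $N_2\geqslant 2$, in which case the type-(a) inequality $N_3\geqslant 2N_2$ forces $N_3\geqslant 4$ and one can strip a $2\times 6$ block (second row $223333$ over first row $111122$) to land in $\mathsf{Tab}_{\scalebox{0.8}{\young{1&2&3\cr}},k-4}$, accounting for the $s_{66}\odot(\cdot)$ term by induction; or $N_2=0$, in which case the type-(a) conditions reduce to $N_3\in\{0\}\cup\{2,3,\ldots,k\}$, giving precisely the shapes $(3k)$ and $(3k-r,r)$ for $2\leqslant r\leqslant k$. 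So the paper's argument is a short recurrence-matching, not a coefficient-by-coefficient restriction of a closed formula, and it bypasses the two-variable identity you set up.
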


\begin{proof}
A result of \cite[Theorem 2.1]{Tetreault2020} says that
$$s_3[s_k] \downarrow_2 = s_{66} \odot s_3[s_{k-4}] \downarrow_2 + \sum_{r=2}^k s_{3k-r,r} + s_{3k}.$$
\squaresize=11pt
The tableaux $S \in \mathsf{Tab}_{\scalebox{0.8}{\young{1&2&3\cr}},k}$ and $\ell(S) \leqslant 2$
also satisfies this recurrence since they either contain $\scalebox{0.8}{\young{2&2&3&3&3&3\cr}}$ in the
top row and $\scalebox{0.8}{\young{1&1&1&1&2&2\cr}}$ in the bottom row or not.  Those that do are
(by induction) enumerated by the expression $s_{66} \odot s_3[s_{k-4}] \downarrow_2$.
Those tableaux that do not contain $\scalebox{0.8}{\young{2&2&3&3&3&3\cr}}$ and $\scalebox{0.8}{\young{1&1&1&1&2&2\cr}}$
of type $\scalebox{0.8}{\young{1&2&3\cr}}$
are either a single row or have a second row consisting of
exactly $r$ $3$'s for $2 \leqslant r \leqslant k$.
\end{proof}

\begin{lemma}\label{lem:312eq213}
For $k \geqslant 1$,
\squaresize=11pt
$$\sum_{S \in \mathsf{Tab}_{\scalebox{0.8}{\young{3\cr1&2\cr}},k}} s_{\mathsf{shape}(S)}=
\sum_{S \in \mathsf{Tab}_{\scalebox{0.8}{\young{2\cr1&3\cr}},k}} s_{\mathsf{shape}(S)}~.$$
\end{lemma}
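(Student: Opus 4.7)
The plan is to prove the identity by strong induction on $k$, splitting each sum according to the number of rows of the tableau $S$. Let $T_1 = \scalebox{0.8}{\young{3\cr1&2\cr}}$, $T_2 = \scalebox{0.8}{\young{2\cr1&3\cr}}$, and let $A_k, B_k$ denote the two sides of the identity. The base case $k = 1$ is immediate: the $k = 1$ clause of Definition \ref{def:type} sets $\mathsf{type}(S) = S$ for standard tableaux, so $\mathsf{Tab}_{T_i, 1} = \{T_i\}$ for $i=1,2$, giving $A_1 = B_1 = s_{(2,1)}$.

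For the inductive step, decompose $A_k = A_k^{(\leq 2)} + A_k^{(3)}$ according to whether $S$ has at most two rows or exactly three rows, and similarly for $B_k$. If $S \in \mathsf{SSYT}_{(k,k,k)}$ has three rows, semistandardness forces its first column to be $(1,2,3)$ (top to bottom), so removing that column gives $\overline S \in \mathsf{SSYT}_{(k-1,k-1,k-1)}$ with $\mathsf{shape}(S) = \mathsf{shape}(\overline S) + (1,1,1)$. By Definition \ref{def:type}, $\mathsf{type}(S) = \mathsf{type}(\overline S)^t$, and since $T_1^t = T_2$, this gives a bijection between three-row tableaux of type $T_1$ at level $k$ and arbitrary tableaux in $\mathsf{Tab}_{T_2, k-1}$; symmetrically for $T_2$ at level $k$. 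Hence
\[
A_k^{(3)} = s_{(1,1,1)} \odot B_{k-1}, \qquad B_k^{(3)} = s_{(1,1,1)} \odot A_{k-1},
\]
which agree by the inductive hypothesis $A_{k-1} = B_{k-1}$. It remains to prove $A_k^{(\leq 2)} = B_k^{(\leq 2)}$.

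For tableaux of content $(k,k,k)$ with at most two rows, the shape is $(3k - n, n)$ where $n = N_2 + N_3$, and each valid pair $(N_2, N_3)$ satisfying $0 \leq N_2, N_3 \leq k$ and $N_3 \leq 2(k - N_2)$ determines $S$ uniquely. For fixed $n$, the $N_2$-values satisfying the disjunction ``$N_3 < 2N_2$ or $N_3 = 2N_2 + 1$'' from Definition \ref{def:type}, intersected with the valid range $[\max(0, n-k), \min(k, n, 2k-n)]$, form a single consecutive integer interval $[L, R]$. Within this interval, even $N_2$ give type-$T_1$ tableaux and odd $N_2$ give type-$T_2$ tableaux, so equality of the coefficients of $s_{(3k-n, n)}$ in $A_k^{(\leq 2)}$ and $B_k^{(\leq 2)}$ reduces to $[L, R]$ containing equally many even and odd integers, equivalently to $R - L$ being odd.

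The main obstacle is showing that $R - L$ is odd in every nonempty case. Setting $m = \lfloor n/3 \rfloor$, the type condition gives $L = \max(m+1, n-k, 0)$ when $n \not\equiv 1 \pmod 3$ and $L = \max(m, n-k, 0)$ when $n \equiv 1 \pmod 3$, while $R = \min(k, n, 2k-n)$ always. A case analysis on which argument achieves the $\max$ and the $\min$ produces a handful of subcases; in each nonempty one, $R - L$ simplifies to one of $2m - 1$, $2m + 1$, $2k - 4m - 1$, or $2k - 4m - 3$, all odd, while the remaining subcases force $L > R$ and thus an empty interval. A more transparent alternative is to construct an explicit shape-preserving involution on two-row tableaux by pairing consecutive values of $[L, R]$ as $\{L, L+1\}, \{L+2, L+3\}, \ldots$, which directly matches type-$T_1$ with type-$T_2$ tableaux of the same shape.
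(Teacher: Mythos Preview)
Your argument is correct. The three-row reduction via the first column and the identity $T_1^t = T_2$ is essentially the same idea as the paper's Case~1 (the paper phrases it as a recursive bijection $\phi$, you phrase it as induction on $k$, but these are equivalent). Where you genuinely diverge is in the two-row case: the paper constructs an explicit shape-preserving bijection by swapping a single $2$ and a single $3$ between the two rows, the direction of the swap being governed by the parity of the second-row length, and then leaves the verification to the reader. You instead parametrize all two-row tableaux of a fixed shape $(3k-n,n)$ by the single integer $N_2$, identify the set of $N_2$ satisfying the type-$(c)/(d)$ condition as a consecutive interval $[L,R]$, and show by a parity computation that $R-L$ is odd whenever the interval is nonempty. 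Your case analysis (yielding $R-L \in \{2m-1, 2m+1, 2k-4m-1, 2k-4m-3\}$ in the nonempty subcases and $L>R$ otherwise) is accurate; I checked it. The advantage of your route is that correctness is fully verified rather than delegated to the reader; the advantage of the paper's route is that the swap map is conceptually simpler and avoids the $\bmod\ 3$ case split. Your closing remark about pairing $\{L,L+1\},\{L+2,L+3\},\ldots$ is not really an independent alternative, since it presupposes that $R-L$ is odd; it is just the bijection one reads off once the count is established. One very small point: you invoke ``strong induction'' but only ever use the hypothesis at $k-1$, so ordinary induction suffices.
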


\begin{proof}
We define a map which we denote
\squaresize=11pt
\[
	\phi \colon \mathsf{Tab}_{\scalebox{0.8}{\young{3\cr1&2\cr}},k} \rightarrow \mathsf{Tab}_{\scalebox{0.8}{\young{2\cr1&3\cr}},k}~.
\]

\noindent
\textbf{Case 1:} If the length of $S$ is 3, then let $\overline S \in \mathsf{SSYT}_{(k,k,k)}$ be the tableau $S$ with 
the first column removed. Define $\phi(S)$ to be a column of length $3$ added to $\phi^{-1}({\overline S})$.

\smallskip

\noindent
\textbf{Case 2:} If the length of the second row of $S$ is odd, then let
$S'$ be the tableau where one takes a $3$ from the second row of $S$ and exchanges
it with a $2$ in the first row.

\smallskip

\noindent
\textbf{Case 3:} If the length of the second row of $S$ is even, then let
$S'$ be the tableau such that one takes a $3$ from the first row of $S$ and exchanges
it with a $2$ in the second row.

We leave to the reader to check the details that this is a bijection.
\end{proof}

\begin{proof}[Proof of Theorem~\ref{th:case3}]
\squaresize=11pt
Assume by induction that \eqref{eq:typesum} holds for some fixed $k$ and all
$T \in \tabsizethree$.
By Lemma~\ref{lem:works123} the terms of length less than or equal to
$2$ in $s_{3}[s_{k+1}]$ are given by the $S \in \mathsf{Tab}_{\scalebox{0.8}{\young{1&2&3\cr}},k+1}$
such that $\ell(S) \leqslant 2$ and by~\eqref{equation.colrows} we have
$s_{111}^\perp s_{3}[s_{k+1}] = s_{111}[s_k]$. Then, 
it follows by induction that the terms of length 3 are given by those
$S \in \mathsf{Tab}_{\scalebox{0.8}{\young{1&2&3\cr}},k+1}$ with $\ell(S)=3$.
Therefore,
\begin{equation}\label{eq:right123}
	s_3[s_{k+1}] = \sum_{S \in \mathsf{Tab}_{\scalebox{0.8}{\young{1&2&3\cr}},k+1}} s_{\mathsf{shape}(S)}~.
\end{equation}

From \cite[Section I.8 Example 9]{Macdonald},
\[
	s_2[s_{k+1}] = \sum_{S} s_{\mathsf{shape}(S)},
\]
where the sum is over $S \in \mathsf{SSYT}_{(k+1,k+1)}$ with $N_2$ is even. Therefore,
\[
	s_{21}[s_{k+1}] + s_{3}[s_{k+1}] = s_2[s_{k+1}] s_{k+1}
	= \sum_{S \in \mathsf{Tab}_{\scalebox{0.8}{\young{1&2&3\cr}}, k+1}} s_{\mathsf{shape}(S)} +
	\sum_{S \in \mathsf{Tab}_{\scalebox{0.8}{\young{3\cr1&2\cr}}, k+1}} s_{\mathsf{shape}(S)},
\]
where the last equality holds because $\mathsf{Tab}_{\scalebox{0.8}{\young{1&2&3\cr}}, k+1} \cup \mathsf{Tab}_{\scalebox{0.8}{\young{3\cr1&2\cr}}, k+1}$
are all $S \in \mathsf{SSYT}_{(k+1,k+1,k+1)}$ such that
$N_2$ is even.  We conclude by subtracting Equation~\eqref{eq:right123} from both sides of the equation
and Lemma~\ref{lem:312eq213} that
\begin{equation}\label{eq:right312and213}
	s_{21}[s_{k+1}]
	= \sum_{S \in \mathsf{Tab}_{\scalebox{0.8}{\young{3\cr1&2\cr}},k+1}} s_{\mathsf{shape}(S)}
	= \sum_{S \in \mathsf{Tab}_{\scalebox{0.8}{\young{2\cr1&3\cr}},k+1}} s_{\mathsf{shape}(S)}~.
\end{equation}

We again use~\cite[Section I.8 Example 9]{Macdonald}, which says that
\[
	s_{11}[s_{k+1}] = \sum_{S} s_{\mathsf{shape}(S)},
\]
where the sum is over $S \in \mathsf{SSYT}_{(k+1,k+1)}$ with $N_2$ is odd so that
$$s_{21}[s_{k+1}] + s_{111}[s_{k+1}]
= s_{11}[s_{k+1}] s_{k+1}
= \sum_{S \in \mathsf{Tab}_{\scalebox{0.8}{\young{3\cr2\cr1\cr}}, k+1}} s_{\mathsf{shape}(S)} +
\sum_{S \in \mathsf{Tab}_{\scalebox{0.8}{\young{2\cr1&3\cr}}, k+1}} s_{\mathsf{shape}(S)}.$$
The last equality holds because 
$\mathsf{Tab}_{\scalebox{0.8}{\young{3\cr2\cr1\cr}}, k+1} \cup \mathsf{Tab}_{\scalebox{0.8}{\young{2\cr1&3\cr}}, k+1}$
are all $S \in \mathsf{SSYT}_{(k+1,k+1,k+1)}$ such that
$N_2$ is odd.  It then follows by subtracting Equation \eqref{eq:right312and213}
from both sides of the equation that
$$
s_{111}[s_{k+1}] =
\sum_{S \in \mathsf{Tab}_{\scalebox{0.8}{\young{3\cr2\cr1\cr}}, k+1}} s_{\mathsf{shape}(S)}~.
$$
This concludes the proof by induction since
Equation~\eqref{eq:typesum} holds for $k \rightarrow k+1$
and all $T \in \tabsizethree$.
\end{proof}

\subsection{Explicit formulas for $s_\lambda[s_2]$ and $s_\lambda[s_{1^2}]$ in special cases}

A partition $\lambda$ is called \defn{even} if all columns have even length. A partition $\lambda$ is called \defn{threshold} if
$\lambda'_i = \lambda_i+1$ for all $1\leqslant i \leqslant d(\lambda)$ where $d(\lambda)$ is the maximal $i$ such that $(i,i) \in \lambda$.

\begin{theorem}
\label{theorem.row col}
We have
\begin{align}
\label{equation.row plethysm}
	s_h[s_2] &= \sum_{\substack{\lambda \vdash 2h\\ \lambda \text{ even}}} s_{\lambda'},&
	s_h[s_{1^2}] &= \sum_{\substack{\lambda \vdash 2h\\ \lambda \text{ even}}} s_{\lambda},\\
	\label{equation.col plethysm}
	s_{1^h}[s_2] &= \sum_{\substack{\lambda \vdash 2h\\ \lambda \text{ threshold}}} s_{\lambda'},&
        s_{1^h}[s_{1^2}] &= \sum_{\substack{\lambda \vdash 2h\\ \lambda \text{ threshold}}} s_{\lambda}.
\end{align}
\end{theorem}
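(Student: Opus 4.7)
The plan is to split the four formulas into two pairs related by the standard involution $\omega$, and prove the two surviving identities by induction using the $s$-perp trick.

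\emph{Reduction via $\omega$.} A direct power-sum calculation (using~\eqref{equation.Xpmt} and~\eqref{equation.omegatfactor}) shows that $\omega(f[g]) = f[\omega g]$ whenever $\deg g$ is even. Since $\deg s_2 = 2$, this gives $\omega(f[s_2]) = f[s_{1^2}]$ for every $f \in \Lambda$. Applying $\omega$ to the first equation of~\eqref{equation.row plethysm} (and using $\omega s_{\lambda'} = s_\lambda$) yields the second equation of~\eqref{equation.row plethysm}, and similarly the two equations of~\eqref{equation.col plethysm} are equivalent. Thus it suffices to prove only the first equation in each of~\eqref{equation.row plethysm} and~\eqref{equation.col plethysm}.

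\emph{Proof of~\eqref{equation.row plethysm}, first equation.} Let $B_h = \sum_{\mu \vdash 2h,\ \mu \text{ has all even parts}} s_\mu$; since $\lambda$ has all columns even iff $\mu = \lambda'$ has all parts even, this matches the claimed right-hand side. I induct on $h$, with $B_0 = 1 = s_0[s_2]$. From~\eqref{equation.colrows}, $s_{1^r}^\perp s_h[s_2] = s_{h-r}[s_2]\,s_{1^r}$ for $1 \leq r \leq h$, so by Proposition~\ref{proposition.sperptrick} and the inductive hypothesis it suffices to verify $s_{1^r}^\perp B_h = B_{h-r}\,s_{1^r}$. Comparing $s_\nu$-coefficients via Pieri and the definition of $s_{1^r}^\perp$, each side counts partitions obtained from $\nu$ by adding (resp.\ removing) a vertical strip of size $r$ subject to the all-even-parts condition. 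The key point: if $\mu = \nu + \epsilon$ with $\epsilon \in \{0,1\}^\ell$ is to have all even parts, one must have $\epsilon_i \equiv \nu_i \pmod 2$ for every $i$, uniquely determining $\epsilon$. A short interlacing check shows the resulting $\mu$ is automatically a valid partition. The same analysis applies to $\tau = \nu - \epsilon$, so both sides equal $1$ exactly when $\nu$ has exactly $r$ odd parts, and vanish otherwise.

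\emph{Proof of~\eqref{equation.col plethysm}, first equation.} Let $D_h = \sum_{\lambda \vdash 2h,\ \lambda \text{ threshold}} s_{\lambda'}$. In Frobenius coordinates, the partitions $\mu = \lambda'$ appearing here are precisely those with data $(b_1+1,\ldots,b_k+1 \mid b_1,\ldots,b_k)$ for a strict partition $b_1 > \cdots > b_k \geq 0$, which are in bijection with strict partitions of $h$. By~\eqref{equation.rowscol}, $s_{1^r}^\perp s_{1^h}[s_2] = s_{1^{h-r}}[s_2]\,s_r$, and the same inductive strategy reduces the theorem to
\[
\#\{\mu : \mu' \text{ threshold},\ |\mu|=2h,\ \mu/\nu \text{ VS}_r\}
\;=\;
\#\{\tau : \tau' \text{ threshold},\ |\tau|=2h-2r,\ \nu/\tau \text{ HS}_r\}.
\]
The hard part will be establishing this combinatorial identity: unlike the even-rows case where the same type of strip appears on both sides, here the asymmetry between vertical and horizontal strips rules out a direct uniqueness argument. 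I would complete the proof by analyzing the Frobenius profile of $\nu$, examining how a VS-addition (resp.\ HS-removal) interacts with the requirement that the resulting shape have Frobenius data $(b+1 \mid b)$, and checking in each case that both sides are simultaneously $0$ or $1$ under matching conditions on $\nu$.
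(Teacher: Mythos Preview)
Your reduction via $\omega$ and your proof of the even--parts identity are correct and are essentially the conjugate version of the paper's argument: the paper works on the $s_{1^2}$ side with $s_r^\perp$ and even columns, you work on the $s_2$ side with $s_{1^r}^\perp$ and even parts, and the uniqueness argument is the same after transposition.

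The threshold case, however, has a genuine gap. You propose to finish by ``checking in each case that both sides are simultaneously $0$ or $1$'', but this is false. Take $h=6$, $r=2$, $\nu'=(5,3,1,1)$. On the left, both $\mu=(6,3,1,1,1)$ and $\mu=(5,4,2,1)$ have $\mu'$ threshold of size $12$ with $\mu/\nu'$ a vertical $2$-strip; on the right, both $\tau'=(5,1,1,1)$ and $\tau'=(4,3,1)$ have $\tau$ threshold of size $8$ with $\nu'/\tau'$ a horizontal $2$-strip. So both coefficients equal $2$, not $1$, and a case-check for $0/1$ cannot succeed. What is needed is an honest bijection between the two families of pairs. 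The paper supplies exactly this: working on the $s_{1^2}$ side, it introduces for a threshold partition the notion of the \emph{opposite cell} $\mathsf{op}(s,t)$ (namely $(t+1,s)$ if $s\leqslant t$ and $(t,s-1)$ otherwise) and, given $\lambda$ threshold with $\lambda/\nu$ a horizontal $r$-strip, removes the opposite cells (with a correction step when an opposite cell is already absent) to produce a threshold $\mu\subseteq\nu$ with $\nu/\mu$ a vertical $r$-strip. This map is what carries the multiplicities, and it is the missing ingredient in your outline.
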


\begin{remark}
The second equation in~\eqref{equation.col plethysm} has appeared in~\cite[Theorem 5.2]{KlivansReiner.2008}.
The formulas of Theorem~\ref{theorem.row col} also appear in~\cite[Page 138]{Macdonald} and were originally
proven in \cite[Equations (11.9; 1) through (11.9; 4)]{Littlewood1940}, however these proofs are different from the ones we present here.
\end{remark}

A \defn{cell} of a partition $\lambda$ is a pair $(r,c)$, where
$1 \leqslant r \leqslant \ell(\lambda)$ and $1 \leqslant c \leqslant \lambda_r$.
A \defn{corner} of a partition is a cell of the partition $(r,c)$ such that
$(r+1,c)$ and $(r,c+1)$ are not cells of the partition.
For the proof of Theorem~\ref{theorem.row col}, we need the notion of \defn{opposite cell} for each cell $(s,t)$ in a threshold partition 
$\lambda$. The opposite cell $\mathsf{op}(s,t)$ of $(s,t)$ is defined to be $(t+1,s)$ if $s\leqslant t$ and $(t,s-1)$ otherwise.

\begin{proof}[Proof of Theorem~\ref{theorem.row col}]
We prove the second equation in~\eqref{equation.row plethysm} by checking that 
\[
	s_r^\perp s_{h}[s_{1^2}] = s_{{h-r}}[s_{1^2}] s_{r}
\]
from~\eqref{equation.rowscol} holds for all $r$. We have 
\[
	s_r^\perp s_\lambda = \sum_\nu s_\nu,
\]
where the sum is over all $\nu$ such that $\lambda/\nu$ has at most one cell per column
and $|\lambda/\nu|=r$.
In the columns, where there is a cell in $\lambda/\nu$, the length of the columns of $\nu$
will be odd.  This implies that $\nu$ contains an even partition $\mu$ with $\nu/\mu$ containing the
cells which make those columns odd and so will also have at most one cell per column.  Therefore,
\[
	s_r^\perp s_{h}[s_{1^2}] 
	= s_r^\perp \sum_{\substack{\lambda \vdash 2h\\ \lambda \text{ even}}} s_\lambda
	= \sum_{\substack{\nu \vdash 2h-r\\ \nu/\mu \text{ horizontal $r$-strip}\\
	\text{for some } \mu \vdash 2h-2r \text{ even}}} s_\nu
	= \sum_{\substack{\mu \vdash 2h-2r\\ \mu \text{ even}}} s_\mu s_r
	= s_{h-r}[s_{1^2}] s_r,
\]
where the second to last equality follows from the Pieri rule.

Similarly, we prove the second equation in~\eqref{equation.col plethysm} by checking that
\[
	s_r^\perp s_{1^h}[s_{1^2}] = s_{1^{h-r}}[s_{1^2}] s_{1^r}
\]
from~\eqref{equation.colrows} holds for all $r$. We start with
\[
	s_r^\perp s_{1^h}[s_{1^2}] 
	= s_r^\perp \sum_{\substack{\lambda \vdash 2h\\ \lambda \text{ threshold}}} s_\lambda
	= \sum_{\substack{\nu \vdash 2h-r\\ \lambda/\nu \text{ vertical $r$-strip}\\ \text{for some $\lambda \vdash 2h$ threshold}}} s_\nu.
\]
For each cell $(s,t)$ in $\lambda/\nu$ in the last sum from rightmost to leftmost (that is largest $t$ to smallest $t$), check whether 
$\mathsf{op}(s,t)$ is in $\nu$. If it is, remove $\mathsf{op}(s,t)$ from $\nu$, otherwise leave the partition unchanged. Call the partition with
all (possible) opposite cells removed $\tau$. Note that $\nu/\tau$ is a vertical strip since the cells in $\lambda/\nu$ form a horizontal
strip and $\mathsf{op}(s,t)$ involves transposition. For each cell $(s,t)$ in $\lambda/\nu$ from largest to smallest $t$ with $t \geqslant s$, for which 
$(s',t')=\mathsf{op}(s,t)$ is not in $\nu$, find the cell $(x,y)$ in $\tau$ with smallest $y \geqslant t'$ such that $\nu/(\tau\setminus \{(x,y)\})$ is a vertical 
strip. Remove $(x,y)$ and $\mathsf{op}(x,y)$ from $\tau$. Call the resulting partition $\mu$. Note that $\mu$ is threshold and furthermore, $\nu/\mu$ is a 
vertical $r$-strip. Hence	
\[	
	s_r^\perp s_{1^h}[s_{1^2}] 
	= \sum_{\substack{\nu \vdash 2h-r\\ \nu/\mu \text{ vertical $r$-strip}\\ \text{for some $\mu\vdash 2h-2r$ threshold}}} s_\nu
	=\sum_{\substack{\mu \vdash 2h-2r\\ \mu \text{ threshold}}} s_\mu s_{1^r} = s_{1^{h-r}}[s_{1^2}] s_{1^r}.
\]

The other two formulas follow from the identity
\begin{equation}
\label{equation.s transpose}
	s_\lambda[s_{\mu'}] = \sum_\gamma \left< s_\lambda[s_\mu], s_\gamma \right> s_{\gamma'}
\end{equation}
for $|\mu|$ even (see Equation~\eqref{equation.omegatfactor}).
\end{proof}

For $h \geqslant 2$ and $1 \leqslant r < h$, we have
\begin{align}
	\label{equation.small hook}
	s_{h-r}[s_{1^2}] s_{1^r}[s_{1^2}] =  s_{(h-r,1^r)}[s_{1^2}] + s_{(h-r+1,1^{r-1})}[s_{1^2}]~,
\end{align}
since by the dual Pieri rule $s_{h-r} s_{1^r} = s_{(h-r,1^r)} + s_{(h-r+1,1^{r-1})}$.

The next result is new.

\begin{cor}
\label{cor.small hook}
We have
\begin{align}
	\label{equation.s hook 1}
	s_{(h-1,1)}[s_{1^2}] &= \sum_{\mu \in \mathcal{P}_{2h}} s_\mu + \sum_{\nu \vdash 2h \text{ even}} (b_{\nu}-1) s_\nu,\\
	\label{equation.s hook 2}
	s_{(h-1,1)}[s_2] &= \sum_{\mu \in \mathcal{P}_{2h}} s_{\mu'} + \sum_{\nu \vdash 2h \text{ even}} (b_{\nu}-1) s_{\nu'},
\end{align}
where $\mathcal{P}_{2h}$ is the set of all partitions of $2h$ with columns of even length except two columns of distinct odd length, and
$b_\nu$ is the number of corners of $\nu$.
\end{cor}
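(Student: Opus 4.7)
My plan is to derive equation~\eqref{equation.s hook 1} directly, and then obtain equation~\eqref{equation.s hook 2} by applying the transpose identity. Specializing equation~\eqref{equation.small hook} to $r=1$ and using $s_1[s_{1^2}] = s_{1^2}$ yields
\begin{equation*}
s_{(h-1,1)}[s_{1^2}] = s_{h-1}[s_{1^2}] \cdot s_{1^2} - s_h[s_{1^2}].
\end{equation*}
Theorem~\ref{theorem.row col} expands both plethysms on the right as sums of Schur functions indexed by even partitions, and the dual Pieri rule expands each product $s_\lambda \cdot s_{1^2}$ as a sum over vertical $2$-strip extensions of $\lambda$. Collecting Schur coefficients reduces the proof to computing, for each $\mu \vdash 2h$, the number $N_\mu$ of even partitions $\lambda \vdash 2h-2$ such that $\mu/\lambda$ is a vertical $2$-strip.

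The core of the argument is a parity-based case analysis. A vertical $2$-strip removes two cells in different rows, so it is either confined to a single column (preserving that column's parity) or split between two distinct columns (flipping both parities). Since we require $\lambda$ to be even, removing the strip must flip every odd column of $\mu$ and preserve every even one, so $N_\mu = 0$ unless $\mu$ has $0$ or exactly $2$ odd columns. If $\mu$ has exactly two odd columns at positions $c_1 < c_2$, the removal is forced to take the top cell of each; this is a valid vertical strip precisely when the two odd columns have distinct lengths (so that the top cells lie in different rows), in which case $\mu \in \mathcal{P}_{2h}$ and $N_\mu = 1$. If $\mu$ is even, the removal must take two cells from a single column $c$; this is valid precisely when $\mu'_c > \mu'_{c+1}$, and the columns $c$ satisfying this condition are in bijection with the corners of $\mu$ via $c \mapsto (\mu'_c, c)$, giving $N_\mu = b_\mu$.

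Assembling these counts and subtracting $s_h[s_{1^2}] = \sum_{\nu \vdash 2h \text{ even}} s_\nu$ yields equation~\eqref{equation.s hook 1}. Equation~\eqref{equation.s hook 2} then follows by applying the transpose identity~\eqref{equation.s transpose} with $\mu = 1^2$ (so $|\mu|=2$ is even), which sends each indexing partition in the expansion of $s_{(h-1,1)}[s_{1^2}]$ to its conjugate while preserving coefficients. The subtlest step is the two-odd-column analysis: one needs to verify that parity forces strict inequalities at the boundaries of the odd columns (since any intermediate column lengths are even while $\mu'_{c_1}$ and $\mu'_{c_2}$ are odd), so that removing the top cells of $c_1$ and $c_2$ always produces a valid partition, and to confirm that no other pair of cells flipping exactly the parities of $c_1$ and $c_2$ can form a vertical strip.
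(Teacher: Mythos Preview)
Your proof is correct and follows essentially the same approach as the paper: both start from~\eqref{equation.small hook} with $r=1$, use Theorem~\ref{theorem.row col} and the dual Pieri rule to expand $s_{h-1}[s_{1^2}]\cdot s_{1^2}$, and then subtract $s_h[s_{1^2}]$; the second identity follows from~\eqref{equation.s transpose}. The only cosmetic difference is that the paper phrases the Pieri step as \emph{adding} a vertical $2$-strip to an even partition, while you phrase it as \emph{removing} one from $\mu$ to reach an even $\lambda$; your parity analysis (and the observation that two odd columns of equal length would force the removed cells into the same row) spells out in more detail what the paper states in a single sentence.
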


\begin{proof}
By~\eqref{equation.small hook} with $r=1$, we have $s_{(h-1,1)}[s_{1^2}] = s_{h-1}[s_{1^2}] s_{1^2} - s_h[s_{1^2}]$.
On the other hand by Theorem~\ref{theorem.row col}, the Schur expansion of $s_{h-1}[s_{1^2}]$ contains the sum over $s_\nu$ where $\nu \vdash 2h-2$ 
is even. Multiplication by $s_{1^2}$ adds a vertical strip of size 2 to the partitions in the Schur expansion by the Pieri rule, so that
\[
	s_{h-1}[s_{1^2}] s_{1^2} = \sum_{\mu \in \mathcal{P}_{2h}} s_\mu + \sum_{\nu \vdash 2h \text{ even}} b_{\nu} s_\nu.
\]
This follows from the fact that adding a vertical strip of length 2 to an even partition gives a partition with two different odd and otherwise even length 
columns (if the two boxes are added to different columns) or an even partition (if the two boxes are added to the same column). An even partition $\nu$ of 
size $2h$ can be obtained in $b_\nu$ ways from an even partition of size $2h-2$ by adding two boxes to a column. Noting that, by Theorem~\ref{theorem.row col}, the Schur expansion of $s_h[s_{1^2}]$ contains all even partitions of size $2h$ proves~\eqref{equation.s hook 1}.

Equation~\eqref{equation.s hook 2} follows from~\eqref{equation.s transpose}.
\end{proof}

\begin{remark}
Note that there is an involution on the partitions in $\mathcal{S}_{2h} = \mathcal{P}_{2h} \cup \{\nu \vdash 2h \mid \text{$\nu$ even}\}$ appearing in the 
expansion in Corollary~\ref{cor.small hook}. Namely, map the partition $\nu=(\nu_1,\nu_2,\ldots,\nu_\ell)$ with $\ell$ even (with possibly $\nu_\ell=0$) to
\[
	w \colon \nu \mapsto (\nu_1+\nu_2,\nu_3+\nu_4,\ldots,\nu_{\ell-1}+\nu_\ell)'.
\]
Note that $w(\nu)\in \mathcal{P}_{2h}$ if $\nu \in \mathcal{P}_{2h}$ and $w(\nu)$ is even if $\nu$ is even. Also, it is not hard to see that $w^2(\nu)=\nu$
and $b_{w(\nu)} = b_{\nu}$. The involution $w$ imposes a symmetry on the Schur expansion of $s_{(h-1,1)}[s_{1^2}]$ and $s_h[s_{1^2}]$.
\end{remark}

\begin{example}
We have
\[
	s_{(3,1)}[s_{1^2}] = s_{(21^6)}+s_{(2^21^4)}+s_{(2^31^2)}+s_{(321^3)}+s_{(32^21)}+s_{(3^21^2)}+s_{(431)}.
\]
Note that $w(431)=(21^6)$, $w(3^21^2)=(2^21^4)$, $w(32^21)=(2^31^2)$, and $w(321^3)=(321^3)$ and indeed the coefficients of $s_\gamma$ and 
$s_{w(\gamma)}$ match.
\end{example}

\begin{cor}
We have
\begin{align}
	s_{(2,1^{h-2})}[s_{1^2}] &= \sum_{\mu \in \mathcal{T}_{2h}} s_\mu + \sum_{\nu \vdash 2h \text{ threshold}} \left\lfloor \frac{b_{\nu}-1}{2} \right\rfloor s_\nu,\\
	s_{(2,1^{h-2})}[s_{2}] &= \sum_{\mu \in \mathcal{T}_{2h}} s_{\mu'} + \sum_{\nu \vdash 2h \text{ threshold}} \left\lfloor \frac{b_{\nu}-1}{2} \right\rfloor s_{\nu'},
\end{align}
where $\mathcal{T}_{2h}$ is the set of all partitions $\lambda$ of $2h$ with $\lambda_i'=\lambda_i+1$ for all $i$ except either
\begin{enumerate}[label=(\roman*)]
\item two distinct $i$ in the range
$1\leqslant i \leqslant d(\lambda)$ for which either $\lambda_i'=\lambda_i+2$
and the cell $(\lambda_i',i)$
is a corner or $\lambda_i'=\lambda_i$; or
\item one $i$ in the range $1\leqslant i \leqslant d(\lambda)$ with $\lambda_i'=\lambda_i+3$.
\end{enumerate}
\end{cor}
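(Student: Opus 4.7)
The plan is to mimic the proof of Corollary~\ref{cor.small hook}. Specializing~\eqref{equation.small hook} to $r = h-1$ yields
\[
    s_{(2,1^{h-2})}[s_{1^2}] \;=\; s_{1^2}\cdot s_{1^{h-1}}[s_{1^2}] \;-\; s_{1^h}[s_{1^2}].
\]
Theorem~\ref{theorem.row col} expands $s_{1^{h-1}}[s_{1^2}]$ and $s_{1^h}[s_{1^2}]$ as sums of Schur functions indexed by threshold partitions, and the dual Pieri rule expresses $s_{1^2}\cdot s_\mu$ as a sum of $s_\nu$ with $\nu/\mu$ a vertical $2$-strip. Thus the coefficient of $s_\nu$ in $s_{(2,1^{h-2})}[s_{1^2}]$ equals $N(\nu) - [\nu \text{ threshold}]$, where
\[
    N(\nu) \;:=\; |\{\mu\vdash 2h-2 \text{ threshold} \,:\, \nu/\mu \text{ is a vertical } 2\text{-strip}\}|.
\]

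The combinatorial heart of the argument is the classification $N(\nu) = 1$ for $\nu\in\mathcal{T}_{2h}$, $N(\nu) = \lfloor(b_\nu-1)/2\rfloor + 1$ for threshold $\nu\vdash 2h$, and $N(\nu) = 0$ otherwise. The key tool is the Frobenius description of a threshold partition $\mu$: arms $c_1 > c_2 > \cdots > c_{d(\mu)} \geqslant 0$ with every leg equal to arm plus one. A case analysis based on whether the added $2$-strip lies in a single column or in two columns, and on whether $d(\nu) = d(\mu)$ or $d(\nu) = d(\mu) + 1$, will show that the resulting $\nu$ is either another threshold partition or has exactly the ``defect signature'' of type (i) or type (ii) from the definition of $\mathcal{T}_{2h}$. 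In the $\mathcal{T}_{2h}$ cases the $2$-strip addition is moreover forced by the location of the exceptional indices, giving $N(\nu) = 1$.

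For threshold $\nu$ with Frobenius arms $(a_1 > \cdots > a_d)$, the valid $2$-strip removals leaving a threshold are of two types: (A) decrease a single arm $a_i$ by one --- possible when $a_i > a_{i+1} + 1$ (with the convention $a_{d+1} := -1$) and $a_i \geqslant 1$ --- or (B) delete the cell $(d,d)$ together with an adjacent cell, which is possible only when $a_d = 0$. Writing $p = |\{1\leqslant i\leqslant d-1 : a_i - a_{i+1}\geqslant 2\}|$ and $q = [a_d\geqslant 1]$, the decomposition $b_\nu = 2p + q + 1$ comes from analyzing upper corners at $(i, a_i+i)$ and lower corners at $(a_i + i + 1, i)$. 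The total count of admissible removals is $p + q + [a_d = 0] = p + 1 = \lceil b_\nu/2\rceil = \lfloor(b_\nu-1)/2\rfloor + 1$ in both parities of $b_\nu$. The second identity for $s_{(2,1^{h-2})}[s_2]$ follows by transposing via~\eqref{equation.s transpose}, since $|s_{1^2}| = 2$ is even.

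The main obstacle will be the bijective portion of the $\mathcal{T}_{2h}$ classification: showing that every $\nu \in \mathcal{T}_{2h}$ arises from exactly one threshold $\mu$ by a unique vertical $2$-strip, and that no partition outside $\mathcal{T}_{2h} \cup \{\text{threshold of } 2h\}$ arises at all. In particular one must verify that the corner hypothesis in case (i) --- that $(\lambda'_i, i)$ be a corner when $\lambda'_i = \lambda_i + 2$ --- is precisely what is forced by the geometry of the $2$-strip addition, and that ``$\lambda'_i = \lambda_i$'' defects always occur in matched pairs coming from either two row-corner removals in distinct columns or one vertical domino removal in a single column.
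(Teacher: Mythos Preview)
Your approach is exactly the one the paper uses. The paper's entire proof is a single sentence: it says to follow the same outline as the proof of Corollary~\ref{cor.small hook}, using~\eqref{equation.small hook} with $r=h-1$ instead of $r=1$, so that $s_{(2,1^{h-2})}[s_{1^2}] = s_{1^2}\, s_{1^{h-1}}[s_{1^2}] - s_{1^h}[s_{1^2}]$, and leaves the combinatorics to the reader. You reproduce this identity and then actually carry out the combinatorial bookkeeping that the paper omits: the Frobenius description of threshold partitions, the corner count $b_\nu = 2p+q+1$, and the enumeration $N(\nu)=p+1=\lfloor(b_\nu-1)/2\rfloor+1$ of vertical $2$-strip removals landing in a smaller threshold partition. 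That computation is correct, and your case split into types (A) and (B) is the right one. The part you flag as the ``main obstacle'' --- verifying that the non-threshold $\nu$ arising from a threshold $\mu$ by a vertical $2$-strip are exactly the partitions in $\mathcal{T}_{2h}$, each with $N(\nu)=1$ --- is indeed the substantive step, and the paper does not spell it out either; your outline of how the defect signature in cases (i) and (ii) matches the two geometric possibilities for the $2$-strip is the right way to proceed.
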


\begin{proof}
The proof follows the same outline as the proof of Corollary~\ref{cor.small hook} using~\eqref{equation.small hook} with $r=h-1$ (instead of $r=1$) so that
$s_{(2,1^{h-2})}[s_{1^2}] = s_{1^2} s_{1^{h-1}}[s_{1^2}] - s_{1^h}[s_{1^2}]$.
\end{proof}

\begin{cor}
We have for $h \geqslant 3$
\[
\begin{split}
	s_{(h-2,1,1)}[s_{1^2}] &= \sum_{\mu\vdash 2h} a_\mu s_\mu,\\
	s_{(h-2,1,1)}[s_{2}] &= \sum_{\mu\vdash 2h} a_\mu s_{\mu'},
\end{split}
\]
where
\[
	a_\mu = \begin{cases}
	\binom{b_\mu-1}{2} & \text{if $\mu$ is even,}\\
	\left(\sum_{\nu \vdash 2(h-2) \text{ even}} c^\mu_{\nu (2,1,1)} \right)-1  & \text{if $\mu \in \mathcal{P}_{2h}$,}\\
	\sum_{\nu \vdash 2(h-2) \text{ even}} c^\mu_{\nu (2,1,1)}   & \text{otherwise.}
	\end{cases}
\]
\end{cor}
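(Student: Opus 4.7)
The plan is to reduce $s_{(h-2,1,1)}[s_{1^2}]$ to quantities already computed, via the Pieri identity $s_{h-2}\cdot s_{1^2} = s_{(h-2,1,1)} + s_{(h-1,1)}$. Since plethysm is multiplicative in its left argument, this gives
\[
s_{(h-2,1,1)}[s_{1^2}] = s_{h-2}[s_{1^2}]\cdot s_{1^2}[s_{1^2}] - s_{(h-1,1)}[s_{1^2}].
\]
By Theorem~\ref{theorem.row col}, $s_{h-2}[s_{1^2}] = \sum_{\nu \vdash 2(h-2),\,\nu\text{ even}} s_\nu$, and the same theorem applied at $h=2$ (where $(2,1,1)$ is the unique threshold partition of $4$) gives $s_{1^2}[s_{1^2}] = s_{(2,1,1)}$. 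Expanding the product by the Littlewood--Richardson rule, the coefficient of $s_\mu$ in $s_{h-2}[s_{1^2}]\cdot s_{(2,1,1)}$ equals $\sum_{\nu \vdash 2(h-2),\,\nu\text{ even}} c^\mu_{\nu\,(2,1,1)}$.

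Next, I would subtract the Schur expansion of $s_{(h-1,1)}[s_{1^2}]$ provided by Corollary~\ref{cor.small hook}. When $\mu$ is neither even nor in $\mathcal{P}_{2h}$, the subtracted coefficient is $0$, yielding $a_\mu = \sum c^\mu_{\nu\,(2,1,1)}$; when $\mu \in \mathcal{P}_{2h}$, the subtracted coefficient is $1$, yielding $a_\mu = \sum c^\mu_{\nu\,(2,1,1)} - 1$. Both agree with the stated formula.

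The case of even $\mu$ is the main obstacle. There the subtracted coefficient is $b_\mu - 1$, so the desired formula reduces to the combinatorial identity
\[
\sum_{\nu \vdash 2(h-2),\,\nu\text{ even}} c^\mu_{\nu\,(2,1,1)} = \binom{b_\mu}{2},
\]
since $\binom{b_\mu-1}{2} + (b_\mu - 1) = \binom{b_\mu}{2}$. To establish this I would argue as follows: because $\mu$ and $\nu$ are both even, every column of $\mu/\nu$ has even length, and $|\mu/\nu|=4$ forces either a single column of four cells or two columns each contributing their bottom two cells. The first option admits no semistandard filling of content $(2,1,1)$ because of strict column increase. In the second option, each of the two columns $c_1<c_2$ must be the column of a corner of $\mu$ (otherwise removing the bottom two cells destroys the partition shape), and distinctness of the two corners together with the evenness of $\mu$ forces the four cells to lie in four different rows (so no $2\times 2$ block can occur); a short case analysis of the three valid lattice words of content $(2,1,1)$, namely $1123$, $1213$, and $1231$, then shows that exactly one LR filling exists for each such configuration. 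Hence each unordered pair of corners contributes exactly one LR tableau, giving $\binom{b_\mu}{2}$ in total.

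Finally, the second identity $s_{(h-2,1,1)}[s_2] = \sum_\mu a_\mu s_{\mu'}$ follows from the first by applying the transposition identity \eqref{equation.s transpose}, which is available because $|s_{1^2}|=2$ is even.
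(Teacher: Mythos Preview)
Your argument is essentially the paper's: the same Pieri reduction $s_{(h-2,1,1)}[s_{1^2}] = s_{h-2}[s_{1^2}]\,s_{(2,1,1)} - s_{(h-1,1)}[s_{1^2}]$, the same appeal to Corollary~\ref{cor.small hook} for the subtraction, the same identity $\sum_{\nu\text{ even}} c^\mu_{\nu\,(2,1,1)}=\binom{b_\mu}{2}$ for even $\mu$, and the same use of \eqref{equation.s transpose} at the end. One small slip: your claim that both columns of $\mu/\nu$ must be corner columns ``otherwise removing the bottom two cells destroys the partition shape'' is false when the two columns are adjacent and of equal height in $\mu$ (then $\nu$ is still a partition and $\mu/\nu$ is a $2\times 2$ block); the correct reason this case contributes nothing is the LR condition, since the forced filling $\begin{smallmatrix}1&1\\2&3\end{smallmatrix}$ has reverse reading word $1,1,3,2$, which fails the lattice test at the prefix $1,1,3$.
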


\begin{proof}
From Equation \eqref{equation.small hook} with $r=2$ we know that
$s_{(h-2,1,1)}[s_{1^2}] = s_{h-2}[s_{1^2}] s_{1^2}[s_{1^2}] - s_{(h-1,1)}[s_{1^2}]$.
We use the fact that $s_{1^2}[s_{1^2}]=s_{(2,1,1)}$ and the expression for
$s_{(h-1,1)}[s_{1^2}]$ from Corollary~\ref{cor.small hook}.
In the Schur expansion of $s_{h-2}[s_{1^2}]$
only even partitions appear.  The partitions $\mu$ indexing the Schur
functions in the product
$s_{h-2}[s_{1^2}] s_{1^2}[s_{1^2}] = s_{h-2}[s_{1^2}] s_{(2,1,1)}$ fall into four cases:
\begin{itemize}
\item Case 1: $\mu$ has exactly 4 odd columns
\item Case 2: $\mu$ has exactly two equal odd columns
\item Case 3: $\mu$ has exactly two distinct odd columns
\item Case 4: $\mu$ is an even partition.
\end{itemize}
Subtracting the combinatorial formula for $s_{(h-1,1)}[s_{1^2}]$ given by Corollary~\ref{cor.small hook} does not subtract anything in Cases 1 and 2, subtracts 1
in Case 3, and $b_\mu-1$ in Case 4. Using that the Littlewood--Richardson coefficient $c^\mu_{\nu (2,1,1)}$ for all $\nu \subseteq \mu$ with 
$\nu \vdash 2(h-2)$ even count the coefficients of $s_\mu$ in $s_{h-2}[s_{1^2}] s_{(2,1,1)}$ hence proves the last two cases in the formula for $a_\mu$.
To prove the first case in $a_\mu$, note that $\sum_{\nu \vdash 2(h-2) \text{ even}} c^\mu_{\nu (2,1,1)}$ for $\mu$ even is equal to
$\binom{b_\mu}{2}$ since this number is counted by Littlewood--Richardson tableaux of shape $\mu/\nu$ and weight $(2,1,1)$.
Recall that a Littlewood--Richardson tableau is a semistandard Young tableau
such that the row reading of the tableau is a reverse lattice word. For $\mu$ and $\nu$ even
these tableaux have to have $21$ in the rightmost column of $\mu/\nu$ and $31$ in the leftmost column 
of $\mu/\nu$. This means we need to pick two corners of $\mu$, which can be done in $\binom{b_\mu}{2}$ ways. Noting
$\binom{b_\mu}{2} - (b_\mu-1) = \binom{b_\mu-1}{2}$ proves the first equation in the corollary.

The second equation follows again from~\eqref{equation.s transpose}.
\end{proof}

For general hooks, we have the following result.

\begin{cor}
\label{cor.inclusion exclusion}
For $h>k\geqslant 0$, we have
\begin{equation}
\label{equation.hook inclusion exclusion}
	s_{(h-k,1^k)}[s_{1^2}] = \sum_{i=0}^k \sum_{\substack{\mu \vdash 2h\\ \nu \vdash 2(h-k+i) \text{ even}\\ \rho \vdash 2(k-i) \text{ threshold}}}
	(-1)^i c^\mu_{\nu\rho} s_\mu.
\end{equation}
\end{cor}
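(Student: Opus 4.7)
The plan is to derive a recursion for $s_{(h-k,1^k)}[s_{1^2}]$ from Equation~\eqref{equation.small hook} and iterate it until the hook becomes trivial, then substitute the closed formulas from Theorem~\ref{theorem.row col} and expand the resulting products using Littlewood--Richardson coefficients.

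More concretely, the first step is to observe that Equation~\eqref{equation.small hook} rearranges to
\[
    s_{(h-r,1^r)}[s_{1^2}] = s_{h-r}[s_{1^2}]\, s_{1^r}[s_{1^2}] - s_{(h-r+1,1^{r-1})}[s_{1^2}],
\]
valid for $1 \leqslant r < h$. I would then prove by induction on $k$ (downward from $r = k$ to $r = 1$, using the base case $s_h[s_{1^2}] = s_h[s_{1^2}]\, s_{1^0}[s_{1^2}]$ since $s_{1^0}[s_{1^2}] = 1$) that the telescoping identity
\[
    s_{(h-k,1^k)}[s_{1^2}] = \sum_{i=0}^{k} (-1)^i\, s_{h-k+i}[s_{1^2}]\, s_{1^{k-i}}[s_{1^2}]
\]
holds. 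This is the hook analogue of the alternating formulas used in the proofs of the preceding corollaries; inductively, the sign $(-1)^i$ arises exactly from how many times the recursion has been unfolded when we reach the $i$-th term.

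Next, I would substitute the closed Schur expansions from Theorem~\ref{theorem.row col}, namely
\[
    s_{h-k+i}[s_{1^2}] = \sum_{\nu \vdash 2(h-k+i)\ \text{even}} s_\nu,
    \qquad
    s_{1^{k-i}}[s_{1^2}] = \sum_{\rho \vdash 2(k-i)\ \text{threshold}} s_\rho,
\]
and expand each product $s_\nu s_\rho = \sum_\mu c^{\mu}_{\nu\rho} s_\mu$ via the Littlewood--Richardson rule. Since $|\nu| + |\rho| = 2h$ for every term, the outer index $\mu$ ranges over partitions of $2h$, and reorganizing the triple sum yields exactly the right-hand side of Equation~\eqref{equation.hook inclusion exclusion}.

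The main obstacle, though modest, is verifying the telescoping induction cleanly at the boundary: one must confirm that the recursion indeed terminates at $i = k$ with the single term $(-1)^k s_h[s_{1^2}]$ coming from the product $s_h[s_{1^2}]\, s_{1^0}[s_{1^2}]$, and that the index ranges $\nu \vdash 2(h-k+i)$ and $\rho \vdash 2(k-i)$ coming out of Theorem~\ref{theorem.row col} line up correctly across all $i$. Beyond this bookkeeping, no new combinatorial input is needed, since the recursion reduces the hook plethysm to already-known row and column plethysms and to the Littlewood--Richardson coefficients.
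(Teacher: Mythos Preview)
Your proposal is correct and follows essentially the same approach as the paper's proof: both use the recursion from Equation~\eqref{equation.small hook}, unfold it (you explicitly via the telescoping identity $s_{(h-k,1^k)}[s_{1^2}] = \sum_{i=0}^{k} (-1)^i\, s_{h-k+i}[s_{1^2}]\, s_{1^{k-i}}[s_{1^2}]$, the paper implicitly by inducting directly on the final formula), and then substitute the closed forms from Theorem~\ref{theorem.row col} together with the Littlewood--Richardson rule. The only difference is organizational---you isolate the telescoping identity as an intermediate step, whereas the paper absorbs it into a single induction on~\eqref{equation.hook inclusion exclusion}---but the content is the same.
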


\begin{proof}
We prove this result by induction on $k$. For $k=0$, Equation~\eqref{equation.hook inclusion exclusion} reads
\[
	s_{h}[s_{1^2}] = \sum_{\substack{\mu \vdash 2h\\ \nu \vdash 2h \text{ even}\\ \rho \vdash 0 \text{ threshold}}}
	 c^\mu_{\nu \rho} s_\mu = \sum_{\mu \vdash 2h \text{ even}} s_\mu,
\]
which is true by~\eqref{equation.row plethysm}.

Now assume by induction that~\eqref{equation.hook inclusion exclusion} holds for $k-1$. Note that by~\eqref{equation.small hook}
\[
	s_{(h-k,1^k)}[s_{1^2}] = s_{h-k}[s_{1^2}] s_{1^k}[s_{1^2}] - s_{(h-k+1,1^{k-1})}[s_{1^2}].
\]
By Theorem~\ref{theorem.row col}  and the Littlewood--Richardson rule, the term $s_{h-k}[s_{1^2}] s_{1^k}[s_{1^2}]$ is the term $i=0$
in~\eqref{equation.hook inclusion exclusion}. By induction, the second term equals
\[
\begin{split}
	s_{(h-k+1,1^{k-1})}[s_{1^2}] &= \sum_{i=0}^{k-1} \sum_{\substack{\mu \vdash 2h\\ \nu \vdash 2(h-k+1+i) \text{ even}\\ \rho \vdash 2(k-1-i) \text{ threshold}}}
	(-1)^i c^\mu_{\nu\rho} s_\mu\\
	&=  - \sum_{i=1}^k \sum_{\substack{\mu \vdash 2h\\ \nu \vdash 2(h-k+i) \text{ even}\\ \rho \vdash 2(k-i) \text{ threshold}}}
	(-1)^i c^\mu_{\nu\rho} s_\mu,
\end{split}
\]
which are the remaining terms in~\eqref{equation.hook inclusion exclusion}, proving the claim.
\end{proof}

\begin{remark}
It can be deduced from Corollary~\ref{cor.inclusion exclusion} that the coefficient of $s_\mu$ with $\mu$ even in $s_{(h-3,1,1,1)}[s_{1^2}]$ is
$(b_\mu-1)(b_\mu-2)(2b_\mu-3)/6$.  The coefficient of $s_\mu$ with $\mu$ even in
$s_{(h-4,1,1,1,1)}[s_{1^2}]$ is not dependent solely on $b_\mu$ since the coefficient of $s_{(3^22^21^4)}$
in $s_{(3,1,1,1,1)}[s_{1^2}]$ is 1 while the coefficient of $s_{(4^22^21^2)}$ is $2$.
\end{remark}

\bibliographystyle{plain}
\bibliography{references}

\end{document}